\theoremstyle{plain}
\newtheorem{theorem}{Theorem}[section]
\newtheorem*{theorem*}{Theorem}
\newtheorem{cor}[theorem]{Corollary}
\newtheorem{ex}[theorem]{Example}
\newtheorem*{mt*}{Main Theorem}
\newcommand{\del}{\partial}
\newcommand{\delbar}{\overline{\del}}
\DeclareMathOperator{\Ker}{Ker}
\title[Almost-complex invariants of families of six-dimensional ...]{Almost-complex invariants of families of six-dimensional solvmanifolds}
\author[Nicoletta Tardini and Adriano Tomassini]{Nicoletta Tardini and Adriano Tomassini}
\address{Dipartimento di Scienze Matematiche, Fisiche e Informatiche\\
Unit\`{a} di Matematica e Informatica,
Universit\`{a} degli Studi di Parma\\
Parco Area delle Scienze 53/A, 43124 \\
Parma, Italy}
\email{nicoletta.tardini@gmail.com}
\email{nicoletta.tardini@unipr.it}
\email{adriano.tomassini@unipr.it}
\keywords{almost-complex structure; almost-K\"ahler structure; Hodge numbers.}
\subjclass[2010]{53C15; 58A14; 58J05}
\begin{document}

\maketitle

\begin{abstract}
We compute almost-complex invariants $h^{p,0}_{\delbar}$, $h^{p,0}_{\text{Dol}}$ and almost-Hermitian invariants $h^{p,0}_{\bar\delta}$ on families of almost-K\"ahler and almost-Hermitian $6$-dimensional solvmanifolds. Finally, as a consequence of almost-K\"ahler identities we provide an obstruction to the existence of a symplectic structure on a given  compact  almost-complex manifold. Notice that, when $(X,J,g,\omega)$ is a compact almost Hermitian manifold of real dimension greater than four, not much is known concerning the numbers $h^{p,q}_{\delbar}$.
\end{abstract}


\section{Introduction}
Let $(X, J)$ be a complex manifold, then the Dolbeault cohomology of $X$
$$
H^{\bullet,\bullet}_{\delbar}(X):=
\frac{\text{Ker}\,\delbar}{\text{Im}\,\delbar}
$$
is well defined and it represents an important holomorphic invariant for the complex manifold. If we drop the integrability assumption on $J$, then $\delbar^2\neq 0$ and such a cohomology is not well defined anymore.\\
However, if we fix a $J$-Hermitian metric $g$ on an almost-complex manifold $(X,J)$ and with $*$ we denote the associated Hodge-$*$-operator, then
$$
\Delta_{\delbar}:=\delbar\,\delbar^*+\delbar^*\delbar
$$
is a well-defined second order, elliptic, differential operator. In particular, if $X$ is compact, then $\text{Ker}\Delta_{\delbar}$
is a finite-dimensional complex vector space and we will denote as usual with $h^{\bullet,\bullet}_{\delbar}$ its dimension.
 If $J$ is integrable, then
$$
H^{\bullet,\bullet}_{\delbar}(X)\simeq \text{Ker}\Delta_{\delbar}\,,
$$
and in particular the dimension of the space of harmonic forms depends only on the complex structure and not on the choice of the Hermitian metric.
In \cite[Problem 20]{hirzebruch} Kodaira and Spencer asked whether this is the case also when $J$ is not integrable. More precisely,
\vskip.2truecm \noindent
{\bf Question I} {\em Let $(M,J)$ be an almost complex manifold. Choose an Hermitian metric on $(M,J)$ and consider the numbers $h^{p,q}_{\delbar}$. Is $h^{p,q}_{\delbar}$ independent of the choice of the Hermitian metric?}
\vskip.1truecm \noindent
In \cite{holt-zhang} Holt and Zhang answered negatively to this question, showing with an explicit example that there exist almost complex structures on the Kodaira-Thurston manifold with Hodge number $h^{0,1}_{\delbar}$ varying with different choices of Hermitian metrics. \\
They also proved that if $(M,J,g,\omega)$ is a $4$-dimensional compact almost-K\"ahler manifold, then $h^{1,1}_{\delbar}=b_-+1$, where $b_-$ denotes the dimension of the space of anti self-dual harmonic forms, namely in such  a case $h^{1,1}_{\delbar}$ has a cohomological meaning. In this context, (see \cite[Question 6.2]{holt-zhang}) they asked the following
\vskip.2truecm \noindent
{\bf Question II} {\em Let $(M,J)$ be an almost complex $4$-manifold which admits an almost K\"ahler structure. Does it have a non almost K\"ahler Hermitian metric such that $h^{1,1}_{\delbar}\neq b_-+1$?}. 
\vskip.1truecm \noindent
About this, in \cite[Theorem 3.7]{tardini-tomassini-1} it is proved that if $g$ is a strictly locally conformally K\"ahler metric on a  $4$-dimensional compact almost complex manifold $(X,J)$, then $h^{1,1}_{\delbar}=b_-$. Therefore, since in the non integrable case almost-K\"ahler metrics and strictly locally conformally K\"ahler metrics can coexist, this gives a positive answer to Question II.
\newline
However, when $(X,J,g)$ is a compact almost Hermitian manifold of real dimension greater than four, not much is known concerning the numbers $h^{p,q}_{\delbar}$ and this may be due also by the lack of explicit  computations of such numbers in the literature.
\newline
As a general fact, in special bidegree $(p,0)$, $h^{p,0}_{\delbar}$ is independent of the choice of the Hermitian metric, indeed in this case being $\delbar$-harmonic is equivalent to be $\delbar$-closed. So, in particular $h^{p,0}_{\delbar}$ is a genuine almost-complex invariant.\\
Notice that $h^{n,0}$ is related to the computation of the \emph{Kodaira dimension} of $2n$-dimensional almost-complex manifolds, recently introduced by H. Chen and W. Zhang in \cite{chen-zhang}. For explicit computations of the Kodaira dimension one can refer to \cite{chen-zhang} for the Kodaira-Thurston manifold and to \cite{cattaneo-nannicini-tomassini},  \cite{cattaneo-nannicini-tomassini-1} for several $6$-dimensional solvmanifolds and $4$-dimensional solvmanifolds with no complex structures.\\
In this paper we will compute explicitly the numbers $h^{p,0}_{\delbar}$, for  $p=1,2,3$, on families of six-dimensional manifolds. More in detail, we will consider a family of completely solvable $6$-dimensional solvmanifolds constructed in \cite{fernandez-deleon-saralegui} which is particularly interesting because it admits invariant symplectic structures and invariant almost-complex structures but it does not admit any integrable invariant complex structures. For this reason, in such a case, the computation of these almost-complex invariants is particularly meaningful.
We will consider on such  manifolds an invariant family of almost-K\"ahler structures and we will compute $h^{p,0}_{\delbar}$, with $p=1,2,3$. Furthermore, we will show that these numbers, differently from the integrable case, can vary when the almost-complex structures are almost-K\"ahler and vary continuously (cf. \cite{holt-zhang}).\\
In fact, we will also construct an almost-complex structure which does not admit any compatible symplectic structure and compute $h^{p,0}_{\delbar}$ in this case.\\
Another example will be provided by the computations of $h^{p,0}_{\delbar}$, with $p=1,2,3$ for an almost-K\"ahler structure on the Iwasawa manifold.

Moreover, denoting with $\mu$ the $(2,-1)$-component of the exterior derivative $d$, in \cite{tardini-tomassini} we considered the following differential operator (cf. also \cite{debartolomeis-tomassini})
$$
\bar\delta:=\delbar+\mu
$$
and studied the corresponding harmonic forms. In particular, we compute on the aforementioned families of almost-Hermitian manifolds the $\bar\delta$-harmonic forms of bidegree $(p,0)$.\\
One should notice that the spaces of $\delbar$-harmonic and $\bar\delta$-harmonic forms on non-integrable almost-complex manifolds do not have a cohomological counterpart. However, in \cite{cirici-wilson-1} J. Cirici and S. O. Wilson introduced a generalization of the Dolbeault cohomology on almost-complex manifolds constructing therefore new invariants in this setting. 
By \cite{coelho-placini-stelzig} these cohomology groups on compact almost-complex manifolds are not finite dimensional in general. This means that we have a deep gap between Hodge theory and cohomological theory on almost-complex manifolds.
However, as noticed in \cite{cirici-wilson-1}, in special bi-degrees, e.g., $(p,0)$, the almost-complex Dolbeault cohomology groups have finite dimensions. For this reason, we compute such groups in bi-degree $(p,0)$, for the families of almost-complex manifolds considered above.\\
The paper is organized as follows: in Section \ref{preliminaries} we start by fixing some notations and recalling the basic facts of almost-complex geometry used in the rest of the paper. In Section \ref{section:ak-family} we construct families of almost-K\"ahler solvmanifolds with no left invariant complex structures and then we compute several numerical almost-complex and almost-Hermitian invariants on them. The basic tools to compute the space of harmonic $(p,0)$-forms are suitable Fourier expansions series adapted to the lattices of the solvmanifolds. In Sections \ref{section:no-ak} and \ref{section:iwasawa} we perform similar computations respectively on the same differentiable manifold endowed with an almost-complex structure that does not admit any compatible symplectic structures and on the Iwasawa manifold endowed with an almost-K\"ahler structure. Finally, we apply harmonic theory to give an obstruction to the existence of compatible symplectic structures on almost-complex manifolds.

\medskip
\noindent{\sl Acknowledgments.} The authors would like to thank Luca Lorenzi for useful discussions on elliptic differential operators. They also want to thank Weiyi Zhang for useful suggestions and remarks. 

\section{Preliminaries}\label{preliminaries}
In this Section we recall some basic facts about almost-complex manifolds and fix some notations.
Let $X$ be a smooth manifold of dimension $2n$ and let $J$ be an almost-complex structure on $X$, i.e., a $(1,1)$-tensor on $X$ such that $J^2=-\text{Id}$. Then, $J$ induces a natural bigrading on the space of complex valued differential forms $A^\bullet(X)$, namely
$$
A^\bullet(X)=\bigoplus_{p+q=\bullet}A^{p,q}(X)\,.
$$
According to this decomposition, the exterior derivative $d$ splits into four operators
$$
d:A^{p,q}(X)\to A^{p+2,q-1}(X)\oplus A^{p+1,q}(X)\oplus A^{p,q+1}(X)\oplus A^{p-1,q+2}(X)
$$
$$
d=\mu+\del+\delbar+\bar\mu\,,
$$
where $\mu$ and $\bar\mu$ are differential operators that are linear over functions. 
The almost-complex structure $J$ is integrable, that is $J$ induces a complex structure on $X$, if and only if $\mu=\bar\mu=0$.\\
In general, since $d^2=0$, one has the following relations
$$
\left\lbrace
\begin{array}{lcl}
\mu^2 & =& 0\\
\mu\del+\del\mu & = & 0\\
\del^2+\mu\delbar+\delbar\mu & = & 0\\
\del\delbar+\delbar\del+\mu\bar\mu+\bar\mu\mu & = & 0\\
\delbar^2+\bar\mu\del+\del\bar\mu & = & 0\\
\bar\mu\delbar+\delbar\bar\mu & = & 0\\
\bar\mu^2 & =& 0
\end{array}
\right.\,
$$
and so the Dolbeault cohomology of $X$
$$
H^{\bullet,\bullet}_{\delbar}(X):=
\frac{\text{Ker}\,\delbar}{\text{Im}\,\delbar}
$$
is well defined if and only if $J$ is integrable.\\
If $g$ is an Hermitian metric on $(X,J)$ with associated fundamental form $\omega$ and $*$ denotes the Hodge-$*$-operator, one can consider the following differential operator
$$
\Delta_{\delbar}:=\delbar\,\delbar^*+\delbar^*\delbar\,.
$$
This is a second order, elliptic, differential operator and we will denote its kernel by
$$
\mathcal{H}^{p,q}_{\delbar}(X):=\text{Ker}\,\Delta_{\delbar_{\vert A^{p,q}(X)}}\,.
$$
If $X$ is compact this space is finite-dimensional and its dimension will be denoted by $h^{p,q}_{\delbar}(X)$. By \cite{holt-zhang} we know that these Hodge numbers are not almost-complex invariants, more precisely they depend on the choice of the Hermitian metric. \\
In \cite{tardini-tomassini} we considered the following differential operator (cf. also \cite{debartolomeis-tomassini})
$$
\bar\delta:=\delbar+\mu
$$
and we set
$$
\Delta_{\bar\delta}:=\bar\delta\bar\delta^*+\bar\delta^*\bar\delta\,.
$$
This is a second order, elliptic, differential operator and we denote with 
$$
\mathcal{H}^k_{\bar\delta}(X):=\Ker\Delta_{\bar\delta_{\vert A^k(X)}}
$$
the space of $\bar\delta$-harmonic $k$-forms and with
$$
\mathcal{H}^{p,q}_{\bar\delta}(X):=\Ker\Delta_{\bar\delta_{\vert A^{p,q}(X)}}
$$
the space of $\bar\delta$-harmonic $(p,q)$-forms. If $X$ is compact these spaces are finite dimensional, and we will set $h^k_{\bar\delta}(X)$ and
$h^{p,q}_{\bar\delta}(X)$ for their dimensions respectively.\\
Moreover, if we set
$$
\Delta_{\mu}:=\mu\mu^*+\mu^*\mu\,,
$$
we have that the associated spaces of harmonic forms
$\mathcal{H}^{\bullet,\bullet}_{\mu}(X)$ and $\mathcal{H}^{\bullet}_{\mu}(X)$ are infinite-dimensional in general, indeed  $\mu$ is linear over functions.\\
In \cite[Proposition 5.5]{tardini-tomassini} we showed that on a compact almost-Hermitian manifold $(X,J,g)$ we have
$$
\mathcal{H}^{\bullet}_{\delbar}(X)\cap
\mathcal{H}^{\bullet}_{\mu}(X)\subseteq
\mathcal{H}^{\bullet}_{\bar\delta}(X)
$$
and on bi-graded forms we have the equality (cf. \cite[Remark 5.6]{tardini-tomassini})
$$
\mathcal{H}^{\bullet,\bullet}_{\delbar}(X)\cap
\mathcal{H}^{\bullet,\bullet}_{\mu}(X)=
\mathcal{H}^{\bullet,\bullet}_{\bar\delta}(X)\,.
$$

\section{Families of Almost-K\"ahler solvmanifolds with no left-invariant complex structures}\label{section:ak-family}

We recall the following construction from \cite{fernandez-deleon-saralegui}.
Let $G$ be the following connected $2$-step solvable $6$-dimensional Lie group 
$$
G:=\left\lbrace
\left[\begin{matrix}
e^t & 0 &   xe^t & 0 & 0 & y_1\\
0 & e^{-t}   & 0 & xe^{-t} & 0 & y_2\\
0 & 0 & e^t   & 0 &  0 & z_1\\
0 & 0 & 0 &  e^{-t}   &   0 & z_2\\
0 & 0 & 0 & 0 &1 &t\\
0 & 0 & 0 & 0 &0 &1\\
\end{matrix}\right]
\mid y_1,y_2,z_1,z_2,t,x\in\mathbb{R}
\right\rbrace
$$
and set
$$
\left\lbrace
\begin{array}{lcl}
e^1 & =& dt\\
e^2 & = & dx\\
e^3 & = & e^{-t}dy_1-xe^{-t}dz_1\\
e^4 & = & e^{t}dy_2-xe^{t}dz_2\\
e^5 & = & e^{-t}dz_1\\
e^6 & = & e^tdz_2
\end{array}
\right.\,,
$$
for a basis of left-invariant $1$-forms on $G$, and the dual basis is given by
$$
\left\lbrace
\begin{array}{lcl}
e_1 & =& \frac{\partial}{\partial t}\\
e_2 & = & \frac{\partial}{\partial x}\\
e_3 & = & e^{t}\frac{\partial}{\partial y_1}\\
e_4 & = & e^{-t} \frac{\partial}{\partial y_2}\\
e_5 & = & e^{t}\frac{\partial}{\partial z_1}+xe^t\frac{\partial}{\partial y_1}\\
e_6 & = & e^{-t}\frac{\partial}{\partial z_2}+xe^{-t}\frac{\partial}{\partial y_2}
\end{array}
\right.\,,
$$
In particular, the following structure equations hold
$$
\left\lbrace
\begin{array}{lcl}
de^1 & =& 0\\
de^2 & = & 0\\
de^3 & = & -e^{13}-e^{25}\\
de^4 & = & e^{14}-e^{26}\\
de^5 & = & -e^{15}\\
de^6 & = & e^{16}
\end{array}
\right.\,,
$$
where, as usual, we set $e^{ij}:=e^i\wedge e^j$, and
$$
[e_1,e_3]=[e_2,e_5]=e_3,\quad
[e_1,e_4]=-[e_2,e_6]=-e_4,\quad
[e_1,e_5]=e_5,\quad
[e_1,e_6]=-e_6\,.
$$
Let $\mathfrak{g}$ be the Lie algebra of $G$, then $\mathfrak{g}$ is completely solvable. In fact, $G$ can be seen as a semidirect product $G=\mathbb{R}^2\ltimes_\Phi\mathbb{R}^4$, where for every $(t,x)\in\mathbb{R}^2$,
$$
\Phi(t,x):\mathbb{R}^4\to\mathbb{R}^4,\quad
\Phi(t,x)=\left[\begin{matrix}
e^t & 0 &   xe^t & 0 \\
0 & e^{-t}   & 0 & xe^{-t} \\
0 & 0 & e^t   & 0 \\
0 & 0 & 0 &  e^{-t} \\
\end{matrix}\right]
$$
and the group operation on $G$ is given by
$$
\left(t,x,y_1,y_2,z_1,z_2\right)*\left(t',x',y'_1,y'_2,z'_1,z'_2\right)=
$$
$$
\left(t+t',x+x',y'_1e^t+xz'_1e^t+y_1,
y'_2e^{-t}+xz'_2e^{-t}+y_2,
z'_1e^t+z_1,z'_2e^{-t}+z_2\right)\,.
$$
A lattice $\Gamma$ for $G$ can be constructed as follows.
Let $B\in SL(2,\mathbb{Z})$ be a unimodular matrix with integer entries and distinct eigenvalues $e^{a_0}$, $e^{-a_0}$. Then there exists a real invertible matrix $P$ such that
$$
PBP^{-1}=
\left[\begin{matrix}
e^{a_0} & 0  \\
0 & e^{-a_0}\\
\end{matrix}\right]\,.
$$
Let $\tilde\Gamma:=a_0\mathbb{Z}\times\mathbb{Z}$ and
$L:=\left((m_1,m_2)P^t,(n_1,n_2)P^t\right)$ with $m_1,m_2,n_1,n_2\in\mathbb{Z}$. Then,
$\Gamma:=\tilde\Gamma\ltimes_\Phi L$ is a lattice in $G$ and we set
$X:=\Gamma\backslash G$ for the associated solvmanifold. In fact, $X$ has the structure of a $\mathbb{T}^4$-bundle over $\mathbb{T}^2$.\\
As proven in \cite{fernandez-deleon-saralegui}, $X$ is a completely solvable solvmanifold which admits symplectic structures but none of them satisfies the Hard Lefschetz condition. Moreover, $X$ is not formal but all the triple Massey products vanish. Finally, $X$ does not admit any invariant integrable complex structure.\\

%

Now we construct a family of left-invariant almost-complex structures on $X$. As noticed in \cite{fernandez-deleon-saralegui} the arbitrary left-invariant symplectic structure on $X$ is given by
$$
\omega_{a,b,c}=ae^{12}+be^{56}+c(e^{36}+e^{45})
$$
with $a,b,c\in\mathbb{R}$ and $a,c\neq 0$. We define the following compatible almost-complex structure $J_{a,b,c}$,
$$
\left\lbrace
\begin{array}{lcl}
J_{a,b,c}e_1 & =& ae_2\\
J_{a,b,c}e_2 & = & -\frac{1}{a}e_1\\
J_{a,b,c}e_3 & = & ce_6\\
J_{a,b,c}e_4 & = & ce_5-be_3\\
J_{a,b,c}e_5 & = & -\frac{1}{c}e_4+be_6\\
J_{a,b,c}e_6 & = & -\frac{1}{c}e_3
\end{array}
\right.\,,
$$
and it acts on forms by
$$
\left\lbrace
\begin{array}{lcl}
J_{a,b,c}e^1 & =& -\frac{1}{a}e^2\\
J_{a,b,c}e^2 & = & ae^1\\
J_{a,b,c}e^3 & = & -be^4-\frac{1}{c}e^6\\
J_{a,b,c}e^4 & = & -\frac{1}{c}e^5\\
J_{a,b,c}e^5 & = & ce^4\\
J_{a,b,c}e^6 & = & be^5+ce^3
\end{array}
\right.\,.
$$
Hence, $(J_{a,b,c},\omega_{a,b,c})$ is a family of left-invariant almost-K\"ahler structures on $X$. \\
A global co-frame of $(1,0)$-forms is provided by
$$
\varphi^1:=ae^1+ie^2\,,\qquad
\varphi^2:=be^5+ce^3+ie^6\,,\qquad
\varphi^3:=ce^4+ie^5\,,
$$
and the dual frame of $(1,0)$-vectors is given by
$$
V_1:=\frac{1}{2}\left(\frac{1}{a}e_1-ie_2\right)\,,\qquad
V_2:=\frac{1}{2}\left(\frac{1}{c}e_3-ie_6\right)\,,\qquad
V_3:=\frac{1}{2}\left(\frac{1}{c}e_4-ie_5+i\frac{b}{c}e_3\right)\,.
$$
In particular, the complex structure equations become
$$
\left\lbrace
\begin{array}{lcl}
d\varphi^1 & =& 0\\
d\varphi^2 & = & \frac{c}{4}\varphi^{13}-\frac{1}{2a}\varphi^{1\bar 2}-\frac{c}{4}\varphi^{1\bar3}+\frac{c}{4}\varphi^{3\bar 1}-
\frac{1}{2a}\varphi^{\bar1\bar2}+\frac{c}{4}\varphi^{\bar1\bar3}\\
d\varphi^3 & = & \frac{c}{4}\varphi^{12}-\frac{c}{4}\varphi^{1\bar 2}+\frac{1}{2a}\varphi^{1\bar3}+\frac{c}{4}\varphi^{2\bar 1}+
\frac{c}{4}\varphi^{\bar1\bar2}+\frac{1}{2a}\varphi^{\bar1\bar3}\\
\end{array}
\right.\,.
$$

\section{Numerical almost-complex and almost-Hermitian invariants on $(X,J_{a,b,c},\omega_{a,b,c})$}

In this section we compute several almost-complex invariants on $(X,J_{a,b,c},\omega_{a,b,c})$. In particular, we start with the Hodge numbers $h^{p,0}_{\delbar}$, with $p=1,2,3$.

%

\subsection{Computations for $\mathcal{H}^{3,0}_{\delbar}$}

We compute now $\mathcal{H}^{3,0}_{\delbar}$ for $X:=(X,J_{a,b,c},\omega_{a,b,c})$. Let
$$
\psi=A\varphi^{123}
$$
with $A$ smooth function on $X$, be an arbitrary $(3,0)$-form on $X$.
By degree reasons, $\psi$ is $\delbar$-harmonic if and only if $\delbar\psi=0$.
Since $\varphi^{123}$ is $\delbar$-closed we have
$$
\delbar\psi=-\bar V_1(A)\varphi^{123\bar 1}-\bar V_2(A)\varphi^{123\bar 2}-\bar V_3(A)\varphi^{123\bar 3},
$$
hence $\delbar\psi=0$ if and only if
$$
\bar V_1(A)=\bar V_2(A)=\bar V_3(A)=0
$$
hence $(V_1\bar V_1+V_2\bar V_2+V_3\bar V_3)(A)=0$ and, since $V_1\bar V_1+V_2\bar V_2+V_3\bar V_3$ is an elliptic operator we have that $A$ is constant. Therefore,
$$
\mathcal{H}^{3,0}_{\delbar}(X)=\left\langle\varphi^{123}\right\rangle
$$
and $h^{3,0}_{\delbar}=1$.

\subsection{Computations for $\mathcal{H}^{1,0}_{\delbar}$}

Let
$$
\psi=A\varphi^{1}+B\varphi^2+D\varphi^3
$$
with $A,B,D$ smooth functions on $X$, be an arbitrary $(1,0)$-form on $X$.
By degree reasons, $\psi$ is $\delbar$-harmonic if and only if $\delbar\psi=0$.
Using the structure equations we have
$$
\delbar\psi=-\bar V_1(A)\varphi^{1\bar1}-\bar V_2(A)\varphi^{1\bar2}-
\bar V_3(A)\varphi^{1\bar3}
-\bar V_1(B)\varphi^{2\bar1}-\bar V_2(B)\varphi^{2\bar2}-
\bar V_3(B)\varphi^{2\bar3}
$$
$$
-\bar V_1(D)\varphi^{3\bar1}-\bar V_2(D)\varphi^{3\bar2}-
\bar V_3(D)\varphi^{3\bar3}-
\frac{B}{2a}\varphi^{1\bar2}-\frac{1}{4}B\varphi^{1\bar3}+
B\frac{c}{4}\varphi^{3\bar1}-\frac{c}{4}D\varphi^{1\bar2}+
\frac{1}{2a}D\varphi^{1\bar 3}+\frac{c}{4}D\varphi^{2\bar 1},
$$
hence $\delbar\psi=0$ if and only if
$$
\left\lbrace
\begin{array}{lcl}
\bar V_1(A) & =& 0\\
\bar V_2(A)+\frac{1}{2a}B+\frac{c}{4}D & = & 0\\
\bar V_3(A)+\frac{1}{4}B-\frac{1}{2a}D & = & 0\\
\bar V_1(B)-\frac{c}{4}D & = & 0\\
\bar V_2(B) & =& 0\\
\bar V_3(B) & =& 0\\
\bar V_1(D)-\frac{c}{4}B & = & 0\\
\bar V_2(D) & =& 0\\
\bar V_3(D) & =& 0\\
\end{array}
\right.\,.
$$
In particular, by $\bar V_2(B)=\bar V_3(B)=0$ we have that $V_2\bar V_2(B)=V_3\bar V_3(B)=0$ and $V_2\bar V_2+V_3\bar V_3$ is a strictly elliptic operator without zero order terms when $B$ is viewed as function of $y_1,y_2,z_1,z_2$. Since the fiber is compact by the maximum principle $B$ is constant on the fibers, then $B$ is a function on the base with $(t,x)$ as coordinates. Namely, $B=B(t,x)$ and similarly by the previous system, $D=D(t,x)$.\\
As a consequence, from the first three equations
$$
(V_1\bar V_1+V_2\bar V_2+V_3\bar V_3)(A)=0
$$
then $A$ is constant.\\
The previous system reduces to
$$
\left\lbrace
\begin{array}{lcl}
\frac{1}{2a}B+\frac{c}{4}D & = & 0\\
\frac{1}{4}B-\frac{1}{2a}D & = & 0\\
\bar V_1(B)-\frac{c}{4}D & = & 0\\
\bar V_1(D)-\frac{c}{4}B & = & 0\\
\end{array}
\right.\,.
$$
In particular,
$$
B=-\frac{ac}{2}D\,,\qquad\text{and}\qquad
\frac{a^2c+4}{4a}D=0\,.
$$
Therefore we have two cases to consider. First, if $a^2c+4\neq 0$ then
$$
D=0,\quad B=0, \quad A=\text{const}
$$
hence
$$
\mathcal{H}^{1,0}_{\delbar}=\left\langle \varphi^1\right\rangle
$$
and $h^{1,0}_{\delbar}=1$.\\
If $a^2c+4= 0$, since $B=-\frac{ac}{2}D$, the system reduces to
$$
\left\lbrace
\begin{array}{lcl}
\frac{ac}{4}\bar V_1(D)+\frac{c}{4}D & = & 0\\
\bar V_1(D)+\frac{ac^2}{8}D & = & 0\\
\end{array}
\right.\,.
$$
that is
$$
\left\lbrace
\begin{array}{lcl}
\left(-\frac{ac^2}{8}+\frac{1}{2a}\right)D & = & 0\\
\bar V_1(D)+\frac{1}{2a}D & = & 0\\
\end{array}
\right.\,.
$$
By the first equation we have $(-a^2c^2+4)D=0$, and recalling that $a^2c+4= 0$, we have two cases. If $a\neq\pm 2$ then
$$
D=0,\quad B=0, \quad A=\text{const}
$$
hence
$$
\mathcal{H}^{1,0}_{\delbar}=\left\langle \varphi^1\right\rangle
$$
and $h^{1,0}_{\delbar}=1$.\\
If $a=\pm 2$, we are left with
$$
\bar V_1(D)\pm\frac{1}{4}D  =  0,\quad
B=\pm D,
\quad A=\text{const}\,.
$$
Since, $D=D(t,x)$ we can expand in Fourier series and get
$$
D=\sum_{\lambda,\mu\in\mathbb{Z}}D_{\lambda\mu}e^{2\pi i(\lambda x+\frac{\mu}{a_0} t)}
$$
with $D_{\lambda\mu}$ constants for every $\lambda,\mu\in\mathbb{Z}$. The equation $\bar V_1(D)\pm\frac{1}{4}D  =  0$ becomes
$$
(\frac{1}{a}2\pi i \frac{\mu}{a_0}-2\pi\lambda)D_{\lambda\mu}\pm\frac{1}{2}D_{\lambda\mu}=0
$$
namely,
$$
\left((-4\pi\lambda\pm1)+i(4\pi\frac{\mu}{a_0}\frac{1}{a})\right)D_{\lambda\mu}=0
$$
and since $-4\pi\lambda\pm1\neq0$ for every $\lambda\in\mathbb{Z}$ we have that $D_{\lambda\mu}$ for every $\lambda,\mu\in\mathbb{Z}$.
Therefore,
$$
D=0,\quad B=0, \quad A=\text{const}
$$
hence
$$
\mathcal{H}^{1,0}_{\delbar}=\left\langle \varphi^1\right\rangle
$$
and $h^{1,0}_{\delbar}=1$.\\

\subsection{Computations for $\mathcal{H}^{2,0}_{\delbar}$}

Let
$$
\psi=A\varphi^{12}+B\varphi^{13}+D\varphi^{23}
$$
with $A,B,D$ smooth functions on $X$, be an arbitrary $(2,0)$-form on $X$.
By degree reasons, $\psi$ is $\delbar$-harmonic if and only if $\delbar\psi=0$.
Using the structure equations we have
$$
\begin{array}{lll}
\delbar\psi &=& \bar V_1(A)\varphi^{12\bar1}+\bar V_2(A)\varphi^{12\bar2}+
\bar V_3(A)\varphi^{12\bar3}
+\bar V_1(B)\varphi^{13\bar1}+\bar V_2(B)\varphi^{13\bar2}+
\\[5pt]
&{}& +\bar V_3(B)\varphi^{13\bar3}+\bar V_1(D)\varphi^{23\bar1}+\bar V_2(D)\varphi^{23\bar2}+
\bar V_3(D)\varphi^{23\bar3}-
\frac{c}{4}A\varphi^{13\bar1}+\\[5pt]
&{}&-  \frac{c}{4}B\varphi^{12\bar1} +D\frac{1}{2a}\varphi^{13\bar2}+
\frac{c}{4}D\varphi^{13\bar3}-
\frac{c}{4}D\varphi^{12\bar 2}+\frac{1}{2a}D\varphi^{12\bar 3},
\end{array}
$$

hence $\delbar\psi=0$ if and only if
$$
\left\lbrace
\begin{array}{lcl}
\bar V_1(A)-\frac{c}{4}B & =& 0\\
\bar V_2(A)-\frac{c}{4}D & = & 0\\
\bar V_3(A)+\frac{1}{2a}D & = & 0\\
\bar V_1(B)-\frac{c}{4}A & = & 0\\
\bar V_2(B)+\frac{1}{2a}D & =& 0\\
\bar V_3(B) +\frac{c}{4}D& =& 0\\
\bar V_1(D) & = & 0\\
\bar V_2(D) & =& 0\\
\bar V_3(D) & =& 0\\
\end{array}
\right.\,.
$$
From the last two equations we obtain immediately that $D=\text{const}$. Hence, from the system we have that
$$
V_2\bar V_2(A)=V_3\bar V_3(A)=V_2\bar V_2(B)=V_3\bar V_3(B)=0
$$
hence, with a similar argument used before we have that 
$$A=A(t,x), \quad B=B(t,x).
$$
In particular, this implies that
$$
D=0.
$$
We can expand in Fourier series and get
$$
A=\sum_{\lambda,\mu\in\mathbb{Z}}A_{\lambda\mu}e^{2\pi i(\lambda x+\frac{\mu}{a_0} t)},\quad
B=\sum_{\lambda,\mu\in\mathbb{Z}}B_{\lambda\mu}e^{2\pi i(\lambda x+\frac{\mu}{a_0} t)}
$$
with $A_{\lambda\mu},B_{\lambda\mu}$ constants for every $\lambda,\mu\in\mathbb{Z}$. 
The first and fourth equations become respectively
$$
\left(\frac{1}{a}2\pi i\frac{\mu}{a_0}-2\pi\lambda\right)A_{\lambda\mu}-
\frac{c}{2}B_{\lambda\mu}=0
$$
$$
\left(\frac{1}{a}2\pi i\frac{\mu}{a_0}-2\pi\lambda\right)B_{\lambda\mu}-
\frac{c}{2}A_{\lambda\mu}=0.
$$
Summing the two equations we get
$$
\left((-2\pi\lambda-\frac{c}{2})+i(\frac{1}{a}2\pi\frac{\mu}{a_0})\right)(A_{\lambda\mu}+B_{\lambda\mu})=0.
$$
Now we consider two cases: $c\notin 4\pi\mathbb{Z}$ and $c\in 4\pi\mathbb{Z}$.\\
If $c\notin 4\pi\mathbb{Z}$, then $A_{\lambda\mu}+B_{\lambda\mu}=0$ for every $\lambda\mu\in\mathbb{Z}$, implying that $A=-B$. In this case, we obtain the following equation
$$
\bar V_1(A)+\frac{c}{4}A  = 0
$$
and so
$$
\left((-2\pi\lambda+\frac{c}{2})+i(\frac{1}{a}2\pi\frac{\mu}{a_0})\right)A_{\lambda\mu}=0.
$$
Therefore, under our assumption $A_{\lambda\mu}=0$ for every $\lambda,\mu\in\mathbb{Z}$ and therefore $B_{\lambda\mu}=0$ for every $\lambda,\mu\in\mathbb{Z}$.
As a consequence we have that if $c\notin 4\pi\mathbb{Z}$,
$$
A=0,\quad B=0, \quad D=0
$$
hence
$$
\mathcal{H}^{2,0}_{\delbar}=0
$$
and $h^{2,0}_{\delbar}=0$.\\
If $c\in 4\pi\mathbb{Z}$, we set $c=4\pi k$ with $k\in\mathbb{Z}\setminus\left\lbrace 0\right\rbrace$, since by construction $c\neq 0$. The equation becomes
$$
\left((-2\pi\lambda-2\pi k)+i(\frac{1}{a}2\pi\frac{\mu}{a_0})\right)(A_{\lambda\mu}+B_{\lambda\mu})=0.
$$
If $(\lambda,\mu)\neq (-k,0)$ then $A_{\lambda\mu}+B_{\lambda\mu}=0$, otherwise the equation is trivially satisfied.\\
Suppose that $(\lambda,\mu)\neq (-k,0)$, then $A_{\lambda\mu}=-B_{\lambda\mu}$ and the first equation becomes
$$
\left((-2\pi\lambda+2\pi k)+i(\frac{1}{a}2\pi\frac{\mu}{a_0})\right)A_{\lambda\mu}=0.
$$
Hence, if, moreover $(\lambda,\mu)\neq (k,0)$ then $A_{\lambda\mu}=-B_{\lambda\mu}=0$. Namely, resuming we have that
\begin{itemize}
\item $A_{\lambda\mu}=B_{\lambda\mu}=0$ if $(\lambda,\mu)\neq (\pm k,0)$
\item $A_{k0}=-B_{k0}=0$
\item we have no informations on $A_{-k0},B_{-k0}$.
\end{itemize}
The Fourier expansions reduces to
$$
A=A_{k0}e^{2\pi ikx}+A_{-k0}e^{-2\pi ikx}
$$
and
$$
B=-A_{k0}e^{2\pi ikx}+B_{-k0}e^{-2\pi ikx}\,.
$$
In particular, the equation $\bar V_1(A)-\frac{c}{4}B=0$ becomes
$$
2\pi k(A_{-k0}-B_{-k0})e^{-2\pi ikx}=0
$$
giving $A_{-k0}=B_{-k0}$, and also the other equations are now satisfied.
Therefore,
$$
A=A_{k0}e^{2\pi ikx}+A_{-k0}e^{-2\pi ikx},\quad
B=-A_{k0}e^{2\pi ikx}+A_{-k0}e^{-2\pi ikx},\quad
D=0
$$
satisfy the system of equations for $\mathcal{H}^{2,0}_{\delbar}$ hence, if $c\in 4\pi\mathbb{Z}$, $c\neq 0$, $h^{2,0}_{\delbar}=2$.

\medskip

Therefore, we just proved the following
\begin{theorem}
Let $(X,J_{a,b,c},\omega_{a,b,c})$ be the family of almost-K\"ahler manifolds previously constructed. Then,
\begin{itemize}
\item $h^{1,0}_{\delbar}=1$,
\item $h^{2,0}_{\delbar}=
\left\lbrace
\begin{array}{lcl}
0 & \text{if }& c\notin 4\pi\mathbb{Z}\\
2 & \text{if }& c\in 4\pi\mathbb{Z}\\
\end{array}
\right.\,,$
\item $h^{3,0}_{\delbar}=1$.
\end{itemize}
\end{theorem}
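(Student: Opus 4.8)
The plan is to use the fact, recalled in the introduction and preliminaries, that in bidegree $(p,0)$ a form is $\delbar$-harmonic if and only if it is $\delbar$-closed. Thus for each $p\in\{1,2,3\}$ it suffices to compute $\Ker\delbar$ on $(p,0)$-forms. I would write a general $(p,0)$-form $\psi$ with smooth coefficients against the global coframe $\varphi^1,\varphi^2,\varphi^3$, apply $\delbar$ using the complex structure equations for $d\varphi^2$ and $d\varphi^3$ (note $d\varphi^1=0$), and collect the condition $\delbar\psi=0$ into a first-order linear PDE system in the coefficients, where the derivations $\bar V_1,\bar V_2,\bar V_3$ appear together with zero-order terms produced by the structure constants. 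The $(3,0)$ case is immediate: since $\varphi^{123}$ is $\delbar$-closed, $\delbar(A\varphi^{123})=0$ forces $\bar V_j(A)=0$ for all $j$, so the elliptic operator $V_1\bar V_1+V_2\bar V_2+V_3\bar V_3$ annihilates $A$ and $A$ is constant, giving $h^{3,0}_{\delbar}=1$.

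For $p=1,2$ the same circle of ideas applies, but with zero-order coupling between the coefficients. The key analytic step is to first isolate those coefficients whose equations carry no zero-order term (for instance $\bar V_2(B)=\bar V_3(B)=0$): applying $V_2,V_3$ shows that a strictly elliptic operator with no zero-order part annihilates the coefficient when it is viewed as a function of the fiber coordinates $y_1,y_2,z_1,z_2$, so by the maximum principle on the compact $\mathbb{T}^4$-fiber the coefficient is constant along fibers, i.e. a function of the base coordinates $(t,x)$ alone. Propagating this through the remaining equations reduces every surviving coefficient to a function on the base torus $\mathbb{T}^2$, at which point the only untreated coefficient (the one coupled to $\bar V_1$) can be analyzed explicitly.

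With the coefficients reduced to functions of $(t,x)$, I would expand them in Fourier series adapted to the lattice, $\sum_{\lambda,\mu}(\cdot)_{\lambda\mu}\,e^{2\pi i(\lambda x+\frac{\mu}{a_0}t)}$, turning the residual PDE system into a family of scalar algebraic systems indexed by $(\lambda,\mu)$. For $p=1$ the zero-order algebra forces a relation of the form $B=-\tfrac{ac}{2}D$ together with $(a^2c+4)D=0$; chasing the arithmetic subcases ($a^2c+4\neq0$, then $a=\pm2$, then the surviving Fourier relation $-4\pi\lambda\pm1\neq0$) each returns $B=D=0$ and $A$ constant, so $h^{1,0}_{\delbar}=1$. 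The genuinely delicate case, which I expect to be the main obstacle, is $p=2$: here the answer depends on a \emph{resonance} condition on the parameter $c$. Symmetrizing the two coupled equations for $A$ and $B$ produces a factor $(-2\pi\lambda-\tfrac{c}{2})+i\tfrac{2\pi\mu}{a a_0}$ multiplying $A_{\lambda\mu}+B_{\lambda\mu}$, and this factor can vanish for some integer mode exactly when $c\in4\pi\mathbb{Z}$.

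The crux is therefore to track precisely which Fourier modes survive. When $c\notin4\pi\mathbb{Z}$ no resonant mode exists, so $A=-B$ everywhere and the antisymmetrized equation then kills these as well, yielding $\mathcal{H}^{2,0}_{\delbar}=0$. When $c=4\pi k$ with $k\in\mathbb{Z}\setminus\{0\}$, exactly the two modes $(\lambda,\mu)=(\pm k,0)$ escape both vanishing conditions; imposing the remaining equations of the full system pins the relations $A_{-k0}=B_{-k0}$ and $A_{k0}=-B_{k0}$, leaving a two-dimensional space and $h^{2,0}_{\delbar}=2$. I expect the main difficulty to be essentially bookkeeping: correctly deriving all the coupled zero-order terms from the structure equations, and, in the resonant $p=2$ case, verifying that the two distinguished modes genuinely solve the \emph{entire} system rather than merely the symmetrized pair. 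Assembling the three computations then gives the stated values.
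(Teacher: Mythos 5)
Your proposal is correct and follows essentially the same route as the paper: harmonicity equals $\delbar$-closedness in bidegree $(p,0)$, reduction of the coefficient PDE system via ellipticity and the maximum principle on the $\mathbb{T}^4$-fibers, Fourier expansion in $(t,x)$ adapted to the lattice, and the same case analysis, including the resonance $c\in 4\pi\mathbb{Z}$ with surviving modes $(\lambda,\mu)=(\pm k,0)$ and the relations $A_{k0}=-B_{k0}$, $A_{-k0}=B_{-k0}$ yielding $h^{2,0}_{\delbar}=2$. All stated values agree with the paper's computation.
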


An immediate consequence is the following result that marks a difference with the integrable case (cf. also \cite{holt-zhang}).
\begin{cor}
The Hodge numbers can vary when the almost-complex structures are almost-K\"ahler and vary continuously.
\end{cor}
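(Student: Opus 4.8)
The plan is to deduce the Corollary directly from the case distinction in the Theorem. The content is that the genuine almost-complex invariant $h^{2,0}_{\delbar}$ (recall that in bidegree $(p,0)$ being $\delbar$-harmonic is equivalent to being $\delbar$-closed, so $h^{p,0}_{\delbar}$ is independent of the metric) takes the value $2$ precisely when $c\in 4\pi\mathbb{Z}$ and the value $0$ otherwise. First I would make precise the phrase \emph{vary continuously}: the tensors $J_{a,b,c}$ and the forms $\omega_{a,b,c}$ are defined by coefficients that are smooth functions of $(a,b,c)$ on the open set $\{a\neq 0,\ c\neq 0\}$, and since they are left-invariant the associated global tensor fields on $X$ depend smoothly — in particular continuously — on the parameters. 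Moreover each $(J_{a,b,c},\omega_{a,b,c})$ is almost-K\"ahler by construction, so the whole parameter set gives a continuous family of almost-K\"ahler structures.

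Next I would exhibit an explicit continuous path along which $h^{2,0}_{\delbar}$ is not constant. Fixing $a=1$ and $b=0$ and letting $c$ vary, consider for example $c(s)=4\pi-2\pi s$ for $s\in[0,1]$, so that $c(0)=4\pi\in 4\pi\mathbb{Z}$ while $c(s)\notin 4\pi\mathbb{Z}$ for all $s\in(0,1]$. Every structure $(J_{1,0,c(s)},\omega_{1,0,c(s)})$ along this path is almost-K\"ahler and the path is continuous in $s$; by the Theorem, $h^{2,0}_{\delbar}=2$ at $s=0$ whereas $h^{2,0}_{\delbar}=0$ for $s\in(0,1]$. Thus $h^{2,0}_{\delbar}$ jumps along a continuous family of almost-K\"ahler structures, which is exactly the assertion of the Corollary.

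Since the Corollary is an immediate consequence of the Theorem, there is no real technical obstacle; the only point worth stressing is the conceptual contrast with the integrable case that the statement is meant to record. For a continuous family of \emph{compact K\"ahler} manifolds the Dolbeault numbers are deformation invariants, hence locally constant: the Fr\"olicher spectral sequence degenerates at $E_1$, so $\sum_{p+q=k}h^{p,q}_{\delbar}=b_k$ is fixed, and together with the upper semicontinuity of each $h^{p,q}_{\delbar}$ this forces every Hodge number to be constant. The Theorem shows that in the non-integrable almost-K\"ahler setting this rigidity fails already for the honest almost-complex invariant $h^{2,0}_{\delbar}$, which is the phenomenon announced and which complements the metric-dependence observed in \cite{holt-zhang}.
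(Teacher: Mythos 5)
Your proposal is correct and follows exactly the paper's route: the paper offers no separate proof, presenting the Corollary as an immediate consequence of the preceding Theorem, and your argument simply makes that deduction explicit by choosing a continuous path of parameters (e.g.\ $a=1$, $b=0$, $c(s)=4\pi-2\pi s$) along which the metric-independent invariant $h^{2,0}_{\delbar}$ jumps from $2$ to $0$. Your added remarks on metric-independence in bidegree $(p,0)$ and on the rigidity of Hodge numbers in the compact K\"ahler case match the context the paper itself invokes, so there is nothing to correct.
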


\medskip
\noindent
We compute now the almost-Hermitian invariants $h^{p,0}_{\bar\delta}$, with $p=1,2,3$.


\noindent
First of all we recall that on bi-graded forms
$\mathcal H^{\bullet,\bullet}_{\bar\delta}=\mathcal H^{\bullet,\bullet}_{\delbar}\cap \mathcal H^{\bullet,\bullet}_{\mu}$, in particular for bidegree reasons 
$$
\mathcal H^{1,0}_{\bar\delta}=\mathcal H^{1,0}_{\delbar}\,,
$$
hence we are left to compute $\mathcal H^{2,0}_{\bar\delta}$ and $\mathcal H^{3,0}_{\bar\delta}$.

\subsection{Computations for $\mathcal{H}^{3,0}_{\bar\delta}$}

It is immediate to see that
$$
\mathcal{H}^{3,0}_{\bar\delta}=\mathcal{H}^{3,0}_{\delbar}\cap
\text{Ker}\,(\mu^*)\,.
$$
Since $\mathcal{H}^{3,0}_{\delbar}=\langle\varphi^{123}\rangle$ we set
$\psi=A\varphi^{123}$ with $A\in\mathbb{C}$. Then, $\psi\in \text{Ker}\,(\mu^*)$ if and only if $\bar\mu*\psi=0$.
Since $*\psi=A\cdot\text{const}\cdot\varphi^{123}$ and, by the structure equation
$$
\bar\mu\varphi^{123}=
\frac{1}{2a}\varphi^{13\bar 1\bar 2}-\frac{c}{4}\varphi^{13\bar1\bar3}+
\frac{c}{4}\varphi^{12\bar1\bar2}+\frac{1}{2a}\varphi^{12\bar1\bar3},
$$
we have that $\bar\mu*\psi=0$ if and only if $A=0$. Therefore,
$$
\mathcal H^{3,0}_{\bar\delta}=\left\lbrace 0\right\rbrace
$$
and $h^{3,0}_{\bar\delta}=0$.

\subsection{Computations for $\mathcal{H}^{2,0}_{\bar\delta}$}

It is immediate to see that
$$
\mathcal{H}^{2,0}_{\bar\delta}=\mathcal{H}^{2,0}_{\delbar}\cap
\text{Ker}\,(\mu^*)\,.
$$
If $c\notin 4\pi\mathbb Z$ then $\mathcal{H}^{2,0}_{\delbar}=\left\lbrace 0\right\rbrace$, hence $\mathcal{H}^{2,0}_{\bar\delta}=\left\lbrace 0\right\rbrace$.\\
Let us assume that $c\in 4\pi\mathbb Z$, namely $c=4\pi k$, with $k\in\mathbb Z\setminus\left\lbrace 0\right\rbrace$.\\
Since 
$$
\mathcal{H}^{2,0}_{\delbar}=\left\langle
e^{2\pi ikx}\varphi^{12}-e^{2\pi ikx}\varphi^{13},
e^{-2\pi ikx}\varphi^{12}+e^{-2\pi ikx}\varphi^{13}
\right\rangle
$$
We set
$$
\psi=A(e^{2\pi ikx}\varphi^{12}-e^{2\pi ikx}\varphi^{13})+B
(e^{-2\pi ikx}\varphi^{12}+e^{-2\pi ikx}\varphi^{13})
$$
 with $A,B\in\mathbb{C}$. Then, $\psi\in \text{Ker}\,(\mu^*)$ if and only if $\bar\mu*\psi=0$.\\
 Since
 $$
 *\varphi^{12}=\frac{i}{2}\varphi^{123\bar 3}\,,\qquad
  *\varphi^{13}=-\frac{i}{2}\varphi^{123\bar 2}
 $$
we have that
$$
*\psi=A\frac{i}{2}(e^{2\pi ikx}\varphi^{123\bar3}+e^{2\pi ikx}\varphi^{123\bar2})+B\frac{i}{2}
(e^{-2\pi ikx}\varphi^{123\bar3}-e^{-2\pi ikx}\varphi^{123\bar 2})\,.
$$
By the structure equations
$$
\bar\mu\varphi^{123\bar2}=\frac{c}{4}\varphi^{13\bar1\bar2\bar3}-
\frac{1}{2a}\varphi^{12\bar1\bar2\bar3}\,,
\quad
\bar\mu\varphi^{123\bar3}=\frac{1}{2a}\varphi^{13\bar1\bar2\bar3}+
\frac{c}{4}\varphi^{12\bar1\bar2\bar3}\,.
$$
Hence, we obtain
$$
\bar\mu*\psi=
\varphi^{12\bar1\bar2\bar3}\left[
A\frac{i}{2}(\frac{c}{4}-\frac{1}{2a})e^{2\pi ikx}+
B\frac{i}{2}(\frac{c}{4}+\frac{1}{2a})e^{-2\pi ikx}\right]+
$$
$$
\varphi^{13\bar1\bar2\bar3}\left[
A\frac{i}{2}(\frac{c}{4}+\frac{1}{2a})e^{2\pi ikx}+
B\frac{i}{2}(\frac{1}{2a}-\frac{c}{4})e^{-2\pi ikx}\right]\,.
$$
Therefore, $\bar\mu*\psi=0$ if and only if
$$
A(\frac{c}{4}-\frac{1}{2a})e^{4\pi ikx}+
B(\frac{c}{4}+\frac{1}{2a})=0\,,
$$
and
$$
A(\frac{c}{4}+\frac{1}{2a})e^{4\pi ikx}+
B(\frac{1}{2a}-\frac{c}{4})=0.
$$
This implies that $A=B=0$, namely $\psi=0$.\\
Therefore,
$$
\mathcal H^{2,0}_{\bar\delta}=\left\lbrace 0\right\rbrace
$$
and $h^{2,0}_{\bar\delta}=0$.\\

\medskip

Therefore, we just proved the following
\begin{theorem}
Let $(X,J_{a,b,c},\omega_{a,b,c})$ be the family of almost-K\"ahler manifolds previously constructed. Then,
\begin{itemize}
\item $h^{1,0}_{\bar\delta}=1$,
\item $h^{2,0}_{\bar\delta}=0$,
\item $h^{3,0}_{\bar\delta}=0$.
\end{itemize}
\end{theorem}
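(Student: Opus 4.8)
The plan is to use the bigraded identity $\mathcal{H}^{p,0}_{\bar\delta}=\mathcal{H}^{p,0}_{\delbar}\cap\mathcal{H}^{p,0}_{\mu}$ recalled in the Preliminaries, so that the problem reduces to intersecting the spaces $\mathcal{H}^{p,0}_{\delbar}$ already determined in the previous Theorem with the kernel of $\Delta_{\mu}$. The starting observation is that on any $(p,0)$-form $\psi$ one has $\mu\psi\in A^{p+2,-1}=0$, so $\mu$-harmonicity amounts to the single condition $\mu^{*}\psi=0$, which dualizes through the Hodge star to $\bar\mu(*\psi)=0$; this is the form of the condition I would verify in every degree. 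For $p=1$ this costs nothing, since $\mu^{*}$ maps $A^{1,0}$ into $A^{-1,1}=0$; hence $\mathcal{H}^{1,0}_{\bar\delta}=\mathcal{H}^{1,0}_{\delbar}$ and $h^{1,0}_{\bar\delta}=h^{1,0}_{\delbar}=1$.

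For $p=3$ I would use $\mathcal{H}^{3,0}_{\delbar}=\langle\varphi^{123}\rangle$ and set $\psi=A\varphi^{123}$ with $A\in\C$. Since on a threefold $*$ sends $A^{3,0}$ to $A^{3,0}$, the form $*\psi$ is again a constant multiple of $\varphi^{123}$, so it suffices to compute $\bar\mu\varphi^{123}$ from the complex structure equations. As this is a nonzero $(2,2)$-form, $\bar\mu(*\psi)=0$ forces $A=0$, giving $\mathcal{H}^{3,0}_{\bar\delta}=\{0\}$ and $h^{3,0}_{\bar\delta}=0$.

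For $p=2$ I would split along the dichotomy of the previous Theorem. If $c\notin 4\pi\Z$ then $\mathcal{H}^{2,0}_{\delbar}=0$ and there is nothing to intersect. If $c=4\pi k$ with $k\in\Z\setminus\{0\}$, I would take the explicit two-element basis of $\mathcal{H}^{2,0}_{\delbar}$, write $\psi$ with constant coefficients $A,B\in\C$, compute $*\psi$ from $*\varphi^{12}$ and $*\varphi^{13}$, and apply $\bar\mu$ using the structure equations for $\bar\mu\varphi^{123\bar2}$ and $\bar\mu\varphi^{123\bar3}$. The outcome $\bar\mu(*\psi)$ is a $(2,3)$-form supported on $\varphi^{12\bar1\bar2\bar3}$ and $\varphi^{13\bar1\bar2\bar3}$, with $A$ entering only through the frequency $e^{2\pi ikx}$ and $B$ only through $e^{-2\pi ikx}$; frequency separation then decouples $\bar\mu(*\psi)=0$ into independent linear conditions on $A$ and on $B$.

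The main obstacle is this last case, where $\mathcal{H}^{2,0}_{\delbar}$ is genuinely two-dimensional, so the vanishing of $h^{2,0}_{\bar\delta}$ relies on $\mu$-harmonicity collapsing the whole space. After decoupling, $A$ must satisfy both $A(\tfrac{c}{4}-\tfrac{1}{2a})=0$ and $A(\tfrac{c}{4}+\tfrac{1}{2a})=0$, while $B$ must satisfy both $B(\tfrac{c}{4}+\tfrac{1}{2a})=0$ and $B(\tfrac{1}{2a}-\tfrac{c}{4})=0$. Since $c\neq0$, the two coefficients in each pair cannot vanish simultaneously, so $A=B=0$ and $h^{2,0}_{\bar\delta}=0$. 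The only delicate point is checking this incompatibility uniformly over the admissible parameters $a$ and $k$; once that is in hand, the two-dimensional $\delbar$-harmonic space is cut down to $\{0\}$ and the theorem follows.
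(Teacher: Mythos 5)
Your proposal is correct and follows essentially the same route as the paper: the bigraded identity $\mathcal{H}^{p,0}_{\bar\delta}=\mathcal{H}^{p,0}_{\delbar}\cap\mathcal{H}^{p,0}_{\mu}$, the reduction of $\mu$-harmonicity on $(p,0)$-forms to $\bar\mu(*\psi)=0$, and then intersecting with the previously computed $\delbar$-harmonic spaces degree by degree, including the case split on $c\in 4\pi\mathbb{Z}$. Your explicit frequency-separation step in bidegree $(2,0)$ (splitting the $e^{4\pi ikx}$ and constant terms, then noting the two resulting coefficient pairs cannot vanish simultaneously since $c\neq 0$) is exactly what the paper leaves implicit in its assertion that the two equations force $A=B=0$.
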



\medskip

Now we compute the dimension of the almost-complex Dolbeault cohomology groups $H^{p,0}_{\text{Dol}}$.\\

First of all, notice that by \cite[Proposition 4.10]{cirici-wilson-1}, 
$$
H^{p,0}_{\text{Dol}}\simeq \mathcal{H}^{p,0}_{\delbar}\cap\text{Ker}\,\bar\mu
$$

\subsection{Computation of $H^{1,0}_{\text{Dol}}$ and $H^{3,0}_{\text{Dol}}$}

Clearly, by the structure equations and by the previous computations

$$
H^{1,0}_{\text{Dol}}\simeq \mathcal{H}^{1,0}_{\delbar}\cap\text{Ker}\,\bar\mu=\left\langle\varphi^1\right\rangle\,.
$$
Now, since $\mathcal{H}^{3,0}_{\delbar}=\left\langle\varphi^{123}\right\rangle$
and by a direct computation $\bar\mu\varphi^{123}\neq 0$, one has that
$$
H^{3,0}_{\text{Dol}}=\left\lbrace 0\right\rbrace.
$$

\subsection{Computation of $H^{2,0}_{\text{Dol}}$}

Notice that, if $c\notin 4\pi\mathbb{Z}$, then $\mathcal{H}^{2,0}_{\delbar}=\left\lbrace 0\right\rbrace$ and so
$$
H^{2,0}_{\text{Dol}}=\left\lbrace 0\right\rbrace.
$$
Let now $c\in 4\pi\mathbb{Z}$, then
$$
\mathcal{H}^{2,0}_{\delbar}=\left\langle
e^{2\pi ikx}\varphi^{12}-e^{2\pi ikx}\varphi^{13},
e^{-2\pi ikx}\varphi^{12}+e^{-2\pi ikx}\varphi^{13}
\right\rangle
$$
We set
$$
\psi=A(e^{2\pi ikx}\varphi^{12}-e^{2\pi ikx}\varphi^{13})+B
(e^{-2\pi ikx}\varphi^{12}+e^{-2\pi ikx}\varphi^{13})
$$
 with $A,B\in\mathbb{C}$. Since
 $$
 \bar\mu\varphi^{12}=\frac{1}{2a}\varphi^{1\bar1\bar2}-\frac{c}{4}\varphi^{1\bar1\bar3}\,,\quad
  \bar\mu\varphi^{13}=-\frac{c}{4}\varphi^{1\bar1\bar2}-\frac{1}{2a}\varphi^{1\bar1\bar3}\,,
 $$
 then, $\bar\mu\psi=0$ if and only if
 $$
A(\frac{c}{4}+\frac{1}{2a})e^{4\pi ikx}+
B(\frac{1}{2a}-\frac{c}{4})=0.
$$
and
 $$
A(-\frac{c}{4}+\frac{1}{2a})e^{4\pi ikx}+
B(-\frac{c}{4}-\frac{1}{2a})=0\,.
$$
This implies that $A=B=0$, and so
$$
H^{2,0}_{\text{Dol}}=\left\lbrace 0\right\rbrace.
$$
 
 \medskip
\noindent
Therefore we proved the following
\begin{theorem}
Let $(X,J_{a,b,c},\omega_{a,b,c})$ be the family of almost-K\"ahler manifolds previously constructed. Then,
\begin{itemize}
\item $h^{1,0}_{\text{Dol}}=1$,
\item $h^{2,0}_{\text{Dol}}=0$,
\item $h^{3,0}_{\text{Dol}}=0$.
\end{itemize}
\end{theorem}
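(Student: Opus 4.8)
The plan is to reduce everything to the identification $H^{p,0}_{\text{Dol}}\simeq\mathcal{H}^{p,0}_{\delbar}\cap\Ker\,\bar\mu$ recalled above from \cite[Proposition 4.10]{cirici-wilson-1}. Since the spaces $\mathcal{H}^{p,0}_{\delbar}$ have already been determined explicitly in the preceding computations — they are generated by $\varphi^1$, by the two $x$-dependent forms when $c\in4\pi\Z$ (and are trivial otherwise), and by $\varphi^{123}$ respectively — the only remaining task is to intersect each of them with $\Ker\,\bar\mu$. Because $\bar\mu$ is linear over functions, evaluating it on these generators is a finite computation governed entirely by the complex structure equations, so in each bidegree the problem becomes one of linear algebra.

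For $p=1$ I would use that $d\varphi^1=0$ and that $\bar\mu\varphi^1$ is just the $(0,2)$-component of $d\varphi^1$; hence $\bar\mu\varphi^1=0$, so $\varphi^1\in\Ker\,\bar\mu$ and $h^{1,0}_{\text{Dol}}=1$. For $p=3$ I would compute $\bar\mu\varphi^{123}$ directly from the structure equations: since this is nonzero, the generator $\varphi^{123}$ of $\mathcal{H}^{3,0}_{\delbar}$ is not in $\Ker\,\bar\mu$, the intersection is trivial, and $h^{3,0}_{\text{Dol}}=0$. Both of these are immediate once the structure equations are unpacked.

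The substantive case is $p=2$. If $c\notin4\pi\Z$ then $\mathcal{H}^{2,0}_{\delbar}=0$ already forces $H^{2,0}_{\text{Dol}}=0$, so there is nothing to check. If $c\in4\pi\Z$, write $c=4\pi k$ and take a general element $\psi$ of the two-dimensional space $\mathcal{H}^{2,0}_{\delbar}$, a combination with coefficients $A,B$ of the two harmonic representatives. Applying $\bar\mu$ and using the formulas for $\bar\mu\varphi^{12}$ and $\bar\mu\varphi^{13}$, I would collect the coefficients of the two independent forms $\varphi^{1\bar1\bar2}$ and $\varphi^{1\bar1\bar3}$; setting them to zero yields a homogeneous $2\times2$ system in the unknowns $Ae^{4\pi ikx}$ and $B$, whose matrix has entries $\tfrac{c}{4}\pm\tfrac{1}{2a}$. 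The determinant of this matrix equals $-\bigl(\tfrac{c}{4}+\tfrac{1}{2a}\bigr)^2-\bigl(\tfrac{1}{2a}-\tfrac{c}{4}\bigr)^2$, a strictly negative real number for admissible $a\neq0$, $c=4\pi k\neq0$, so it cannot vanish; hence $A=B=0$ and $H^{2,0}_{\text{Dol}}=0$.

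The main obstacle is precisely this $p=2$, $c\in4\pi\Z$ computation: one must apply $\bar\mu$ to the $x$-dependent harmonic forms, keep careful track of the exponential factors $e^{\pm2\pi ikx}$ so that terms do not cancel spuriously, and then verify nondegeneracy of the resulting linear system. The key point making the argument close is that the determinant is minus a sum of two squares, which vanishes only if both $\tfrac{c}{4}$ and $\tfrac{1}{2a}$ are zero — excluded by the standing hypotheses $a\neq0$ and $c\neq0$. The bidegrees $p=1$ and $p=3$ present no difficulty beyond reading off the structure equations.
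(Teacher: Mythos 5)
Your proposal is correct and takes essentially the same route as the paper: both reduce to the identification $H^{p,0}_{\text{Dol}}\simeq\mathcal{H}^{p,0}_{\delbar}\cap\Ker\,\bar\mu$, read off the cases $p=1,3$ from the structure equations ($\bar\mu\varphi^1=0$ since $d\varphi^1=0$, and $\bar\mu\varphi^{123}\neq0$), and for $p=2$ with $c\in 4\pi\Z$ arrive at the same homogeneous $2\times2$ system in $Ae^{4\pi ikx}$ and $B$ with matrix entries $\tfrac{c}{4}\pm\tfrac{1}{2a}$. Your determinant argument (minus a sum of two squares, hence nonvanishing for $a\neq0$, $c\neq0$) simply makes explicit the linear-algebra step that the paper states without justification when it asserts $A=B=0$.
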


\section{An almost-complex structure with no compatible symplectic structures}\label{section:no-ak}

We will construct now an almost-complex structure $J$ on $X$ which does not admit any compatible symplectic structures. We set as a global co-frame of $(1,0)$-forms 
$$
\Phi^1:=e^1+ie^2\,,\qquad
\Phi^2:=e^3+ie^4\,,\qquad
\Phi^3:=e^5+ie^6\,,
$$
and the dual frame of $(1,0)$-vectors is given by
$$
W_1:=\frac{1}{2}\left(e_1-ie_2\right)\,,\qquad
W_2:=\frac{1}{2}\left(e_3-ie_4\right)\,,\qquad
W_3:=\frac{1}{2}\left(e_5-ie_6\right)\,.
$$
The complex structure equations become
$$
\left\lbrace
\begin{array}{lcl}
d\Phi^1 & =& 0\\
d\Phi^2 & = & \frac{i}{2}\Phi^{13}-\frac{1}{2}\Phi^{1\bar2}
+\frac{i}{2}\Phi^{3\bar1}-\frac{1}{2}\Phi^{\bar1\bar2}\\
d\Phi^3 & = & -\frac{1}{2}\Phi^{1\bar3}-\frac{1}{2}\Phi^{\bar1\bar3}\\
\end{array}
\right.\,.
$$

Notice that the almost-complex manifold just constructed does not admit any compatible symplectic structures. Indeed, by contradiction, if $(X,J)$ admits a compatible symplectic structure then, by a symmetrization process it also admits a compatible left-invariant symplectic structure. As noticed before, every left-invariant symplectic structure on $X$ is given by
$$
\omega_{a,b,c}=ae^{12}+be^{56}+c(e^{36}+e^{45})
$$
with $a,b,c\in\mathbb{R}$ and $a,c\neq 0$. Hence, by construction $J$ cannot be compatible with any of these symplectic structures.

We compute now the Hodge numbers $h^{p,0}_{\delbar}$, for $p=1,2,3$.

\subsection{Computations for $\mathcal{H}^{1,0}_{\delbar}$}

Let
$$
\psi=A\Phi^{1}+B\Phi^2+C\Phi^3
$$
with $A,B,C$ smooth functions on $X$, be an arbitrary $(1,0)$-form on $X$.
By degree reasons, $\psi$ is $\delbar$-harmonic if and only if $\delbar\psi=0$.
Using the structure equations we have that $\delbar\psi=0$ if and only if
$$
\left\lbrace
\begin{array}{lcl}
\bar W_1(A) & =& 0\\
\bar W_2(A)+\frac{1}{2}B & = & 0\\
\bar W_3(A)+\frac{1}{2}C & = & 0\\
\bar W_1(B) & = & 0\\
\bar W_2(B) & =& 0\\
\bar W_3(B) & =& 0\\
\bar W_1(C)-\frac{i}{2}B & = & 0\\
\bar W_2(C) & =& 0\\
\bar W_3(C) & =& 0\\
\end{array}
\right.\,.
$$
Then from $\bar W_1(B)=\bar W_2(B)=\bar W_3(B)=0$ we get with similar arguments used before that $B$ is constant. Hence
$$
(W_1\bar W_1+W_2\bar W_2+W_3\bar W_3)(C)=0
$$ 
and so $C$ is also constant. As a consequence, the same holds for $A$. Therefore, having $A$ constant, this implies that $B=C=0$. Therefore,
$$
B=0,\quad C=0, \quad A=\text{const}
$$
hence
$$
\mathcal{H}^{1,0}_{\delbar}=\left\langle \Phi^1\right\rangle
$$
and $h^{1,0}_{\delbar}=1$.\\

\subsection{Computations for $\mathcal{H}^{2,0}_{\delbar}$}

Let
$$
\psi=A\Phi^{12}+B\Phi^{13}+C\Phi^{23}
$$
with $A,B,C$ smooth functions on $X$, be an arbitrary $(2,0)$-form on $X$.
By degree reasons, $\psi$ is $\delbar$-harmonic if and only if $\delbar\psi=0$.
Using the structure equations we have that $\delbar\psi=0$ if and only if
$$
\left\lbrace
\begin{array}{lcl}
\bar W_1(A) & =& 0\\
\bar W_2(A) & = & 0\\
\bar W_3(A)-\frac{1}{2}C & = & 0\\
\bar W_1(B)-\frac{i}{2}A & = & 0\\
\bar W_2(B)+\frac{1}{2}C & =& 0\\
\bar W_3(B) & =& 0\\
\bar W_1(C) & = & 0\\
\bar W_2(C) & =& 0\\
\bar W_3(C) & =& 0\\
\end{array}
\right.\,.
$$
Then from $\bar W_1(C)=\bar W_2(C)=\bar W_3(C)=0$ we get with similar arguments used before that $C$ is constant. Hence
$(W_1\bar W_1+W_2\bar W_2+W_3\bar W_3)(A)=0$ and so $A$ is also constant. This implies that $C=0$ and therefore $B$ is constant leading to $A$ being zero. Namely
$$
A=0,\quad C=0, \quad B=\text{const}
$$
hence
$$
\mathcal{H}^{2,0}_{\delbar}=\left\langle \Phi^{13}\right\rangle
$$
and $h^{2,0}_{\delbar}=1$.\\

\subsection{Computations for $\mathcal{H}^{3,0}_{\delbar}$}

Let
$$
\psi=A\Phi^{123}
$$
with $A$ smooth function on $X$, be an arbitrary $(3,0)$-form on $X$.
By degree reasons, $\psi$ is $\delbar$-harmonic if and only if $\delbar\psi=0$.
Since $\Phi^{123}$ is $\delbar$-closed we have
that $\delbar\psi=0$ if and only if
$$
\bar W_1(A)=\bar W_2(A)=\bar W_3(A)=0
$$
hence $(W_1\bar W_1+W_2\bar W_2+W_3\bar W_3)(A)=0$ and so we have that $A$ is constant. Therefore,
$$
\mathcal{H}^{3,0}_{\delbar}(X)=\left\langle\Phi^{123}\right\rangle
$$
and $h^{3,0}_{\delbar}=1$.

\medskip
\noindent
Therefore, we just proved the following
\begin{theorem}
Let $(X,J,\omega)$ be the almost-Hermitian manifold previously constructed. Then,
\begin{itemize}
\item $h^{1,0}_{\delbar}=1$,
\item $h^{2,0}_{\delbar}=0$,
\item $h^{3,0}_{\delbar}=0$.
\end{itemize}
\end{theorem}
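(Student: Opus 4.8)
The plan is to establish the three Hodge numbers by computing the spaces $\mathcal{H}^{p,0}_{\delbar}(X)$ one bidegree at a time, for $p=1,2,3$, so that the theorem merely records their dimensions. The observation that makes the problem tractable is that in bidegree $(p,0)$ being $\delbar$-harmonic is equivalent to being $\delbar$-closed. Indeed $\delbar^{*}$ maps $A^{p,0}(X)$ into $A^{p,-1}(X)=0$, so $\delbar^{*}\psi=0$ automatically and $\langle\Delta_{\delbar}\psi,\psi\rangle=\|\delbar\psi\|^{2}$; since $X$ is compact this forces $\Delta_{\delbar}\psi=0\iff\delbar\psi=0$. Thus each computation reduces to solving the first-order linear system $\delbar\psi=0$.

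For each $p$ I would write $\psi$ as a combination of the global $(p,0)$-coframe (that is $\Phi^{1},\Phi^{2},\Phi^{3}$ for $p=1$, the $\Phi^{ij}$ for $p=2$, and $\Phi^{123}$ for $p=3$) with unknown smooth complex-valued coefficient functions, apply $\delbar$ using the structure equations for $d\Phi^{i}$, and collect the coefficients of the resulting basis of $(p,1)$-forms. This yields, in each bidegree, a system of scalar equations in the unknown coefficients, in which the only differential operators appearing are the antiholomorphic frame fields $\bar W_{1},\bar W_{2},\bar W_{3}$, coupled by constant zeroth-order terms coming from the structure constants.

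The engine for solving these systems is the global second-order operator $W_{1}\bar W_{1}+W_{2}\bar W_{2}+W_{3}\bar W_{3}$, which is elliptic on the compact manifold $X$ (its principal symbol is positive definite, since $\{W_{i},\bar W_{i}\}$ spans $T_{\C}X$) and carries no zeroth-order term; hence any function killed by all three $\bar W_{i}$ is constant. The strategy is to locate, in each system, a coefficient whose three equations read $\bar W_{1}(f)=\bar W_{2}(f)=\bar W_{3}(f)=0$, conclude that $f$ is constant, substitute this constant back, and propagate: a coefficient that has just become constant turns the next triple of equations into homogeneous form, forcing the next coefficient to be constant, and so on. Once every coefficient is constant the remaining equations degenerate to linear-algebraic relations among the constants; solving these pins down $\mathcal{H}^{p,0}_{\delbar}(X)$, whose complex dimension is $h^{p,0}_{\delbar}$.

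The step requiring the most care is the order of elimination, since the coefficients are coupled through the zeroth-order terms: one must check at each stage that setting a coefficient equal to a constant really does convert another coefficient's three equations into the pure form $\bar W_{i}(\cdot)=0$, so that ellipticity can be applied again. Verifying the ellipticity of $\sum_{i}W_{i}\bar W_{i}$ is routine. I expect no Fourier analysis to be needed here, in contrast with the $(X,J_{a,b,c},\omega_{a,b,c})$ family: because the present structure equations have only constant coefficients, each relevant coefficient is annihilated by all three antiholomorphic directions and global ellipticity alone yields constancy, after which the computation is purely algebraic.
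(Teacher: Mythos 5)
Your overall strategy is exactly the one the paper uses for this section: reduce $\delbar$-harmonicity in bidegree $(p,0)$ to $\delbar$-closedness, expand $\psi$ in the invariant coframe with function coefficients, and eliminate coefficients via the elliptic operator $W_1\bar W_1+W_2\bar W_2+W_3\bar W_3$, with no Fourier analysis needed (in contrast to the $(X,J_{a,b,c},\omega_{a,b,c})$ family, where some coefficients are only annihilated by two of the three antiholomorphic fields). One bookkeeping correction: after a coefficient is shown constant, the next coefficient's equations take the form $\bar W_i(f)=c_i$ with $c_i$ constants, not the pure form $\bar W_i(f)=0$; this is harmless, since applying $W_i$ and summing still gives $\left(\sum_i W_i\bar W_i\right)(f)=0$, hence $f$ is constant, and back-substitution then forces the $c_i$ to vanish.

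The genuine problem is that you never carry the computation to the end, and the statement you are asked to prove is not what your computation yields. Running your own plan: for $p=3$, one finds $d\Phi^{123}=\frac12\Phi^{13\bar1\bar2}-\frac12\Phi^{12\bar1\bar3}$, which is of pure type $(2,2)$, so $\delbar\Phi^{123}=0$ and $\mathcal{H}^{3,0}_{\delbar}=\langle\Phi^{123}\rangle$, i.e. $h^{3,0}_{\delbar}=1$, not $0$. For $p=2$, the system forces the coefficients of $\Phi^{12}$ and $\Phi^{23}$ to vanish but leaves the coefficient of $\Phi^{13}$ a free constant, because $d\Phi^{13}=\frac12\Phi^{1\bar1\bar3}$ is of pure type $(1,2)$, so $\delbar\Phi^{13}=0$; hence $\mathcal{H}^{2,0}_{\delbar}=\langle\Phi^{13}\rangle$ and $h^{2,0}_{\delbar}=1$, not $0$. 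Only $h^{1,0}_{\delbar}=1$ is as stated. This is precisely what the paper's own computation subsections conclude immediately before the theorem; the theorem as printed is inconsistent with its own proof, its second and third items apparently copied from the subsequent theorem on $h^{p,0}_{\bar\delta}$, where the extra condition $\mu^*\psi=0$ does kill $\Phi^{13}$ and $\Phi^{123}$ and the values $0,0$ are correct. So your proposal, completed honestly, disproves items two and three of the statement rather than proving them; the gap sits exactly in the unexecuted final step where the solved systems must be compared with the claimed dimensions.
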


We compute now the numbers $h^{p,0}_{\bar\delta}$, for $p=1,2,3$.\\


\noindent
First of all, as noticed before, for bidegree reasons 
$$
\mathcal H^{1,0}_{\bar\delta}=\mathcal H^{1,0}_{\delbar}\,,
$$
hence we are left to compute $\mathcal H^{2,0}_{\bar\delta}$ and $\mathcal H^{3,0}_{\bar\delta}$.

\subsection{Computations for $\mathcal{H}^{2,0}_{\bar\delta}$}

It is immediate to see that
$$
\mathcal{H}^{2,0}_{\bar\delta}=\mathcal{H}^{2,0}_{\delbar}\cap
\text{Ker}\,(\mu^*)\,.
$$
Since $\mathcal{H}^{2,0}_{\delbar}=\langle\Phi^{13}\rangle$ we set
$\psi=A\Phi^{13}$ with $A\in\mathbb{C}$. Then, $\psi\in \text{Ker}\,(\mu^*)$ if and only if $\bar\mu*\psi=0$.
Since $*\psi=-A\frac{i}{2}\Phi^{123\bar2}$ and, by the structure equations
$$
\bar\mu\Phi^{23}=-\frac{1}{2}\Phi^{3\bar1\bar2}+\frac{1}{2}\Phi^{2\bar1\bar3}
$$
we have that
$$
\bar\mu*\psi=A\frac{i}{2}\Phi^1\wedge\bar\mu(\Phi^{23})\wedge
\Phi^{\bar2}=-A\frac{i}{4}\Phi^{12\bar1\bar2\bar3}.
$$
Then, $\bar\mu*\psi=0$ if and only if $A=0$. Therefore,
$$
\mathcal H^{2,0}_{\bar\delta}=\left\lbrace 0\right\rbrace
$$
and $h^{2,0}_{\bar\delta}=0$.

\subsection{Computations for $\mathcal{H}^{3,0}_{\bar\delta}$}

Clearly, as before
$$
\mathcal{H}^{3,0}_{\bar\delta}=\mathcal{H}^{3,0}_{\delbar}\cap
\text{Ker}\,(\mu^*)\,.
$$
Since $\mathcal{H}^{3,0}_{\delbar}=\langle\Phi^{123}\rangle$ we set
$\psi=A\Phi^{123}$ with $A\in\mathbb{C}$. Then, $\psi\in \text{Ker}\,(\mu^*)$ if and only if $\bar\mu*\psi=0$.
Since $*\psi=A\Phi^{123}$ and, by the structure equations
$$
\bar\mu*\psi=A\left(\frac{1}{2}\Phi^{13\bar1\bar2}-\frac{1}{2}\Phi^{12\bar1\bar3}\right).
$$
Then, $\bar\mu*\psi=0$ if and only if $A=0$. Therefore,
$$
\mathcal H^{3,0}_{\bar\delta}=\left\lbrace 0\right\rbrace
$$
and $h^{3,0}_{\bar\delta}=0$.\\

\medskip
\noindent
Therefore, we just proved the following
\begin{theorem}
Let $(X,J,\omega)$ be the almost-Hermitian manifold previously constructed. Then,
\begin{itemize}
\item $h^{1,0}_{\bar\delta}=1$,
\item $h^{2,0}_{\bar\delta}=0$,
\item $h^{3,0}_{\bar\delta}=0$.
\end{itemize}
\end{theorem}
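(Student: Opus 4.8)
The plan is to reduce the whole statement to the already-computed spaces $\mathcal{H}^{p,0}_{\delbar}$ together with the single scalar constraint coming from $\mu$-harmonicity. Recall from the Preliminaries that on bigraded forms $\mathcal{H}^{p,q}_{\bar\delta}=\mathcal{H}^{p,q}_{\delbar}\cap\mathcal{H}^{p,q}_{\mu}$. On a $(p,0)$-form, $\mu$ lands in bidegree $(p+2,-1)$ and hence vanishes identically, so $\mu$-harmonicity is equivalent to $\mu^{*}$-closedness; thus $\mathcal{H}^{p,0}_{\bar\delta}=\mathcal{H}^{p,0}_{\delbar}\cap\Ker\mu^{*}$. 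Since the previous theorem gives $\mathcal{H}^{1,0}_{\delbar}=\langle\Phi^{1}\rangle$, $\mathcal{H}^{2,0}_{\delbar}=\langle\Phi^{13}\rangle$ and $\mathcal{H}^{3,0}_{\delbar}=\langle\Phi^{123}\rangle$, each of these is one-dimensional and spanned by a left-invariant form with constant coefficients, so the problem collapses to testing a single constant multiple of a single generator.

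First, the case $p=1$ is immediate: on $(1,0)$-forms $\mu^{*}$ also vanishes for bidegree reasons (it would land in bidegree $(-1,1)$), so $\mathcal{H}^{1,0}_{\bar\delta}=\mathcal{H}^{1,0}_{\delbar}=\langle\Phi^{1}\rangle$ and $h^{1,0}_{\bar\delta}=1$.

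For $p=2,3$ I would convert the adjoint condition into a computable one. Using $\mu^{*}=\pm\,\ast\,\bar\mu\,\ast$, the constraint $\mu^{*}\psi=0$ is equivalent to $\bar\mu\,\ast\psi=0$. I take $\psi=A\Phi^{13}$ (resp.\ $\psi=A\Phi^{123}$) with $A\in\C$, compute the Hodge star for the metric making $e^{1},\dots,e^{6}$ orthonormal — so that $\ast\Phi^{13}$ is a constant multiple of $\Phi^{123\bar2}$ and $\ast\Phi^{123}$ is a constant multiple of $\Phi^{123}$ — and then apply $\bar\mu$ as a graded derivation. The only inputs needed are $\bar\mu\Phi^{2}=-\tfrac{1}{2}\Phi^{\bar1\bar2}$ and $\bar\mu\Phi^{3}=-\tfrac{1}{2}\Phi^{\bar1\bar3}$, which are exactly the $(0,2)$-components of $d\Phi^{2}$ and $d\Phi^{3}$ read off from the complex structure equations. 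Expanding by the Leibniz rule produces a nonzero invariant $5$-form whose single coefficient is a nonzero multiple of $A$; hence $\bar\mu\,\ast\psi=0$ forces $A=0$, giving $\mathcal{H}^{2,0}_{\bar\delta}=\mathcal{H}^{3,0}_{\bar\delta}=\{0\}$ and $h^{2,0}_{\bar\delta}=h^{3,0}_{\bar\delta}=0$.

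The only real obstacle is bookkeeping: fixing the signs and constants in $\ast\Phi^{13}$, $\ast\Phi^{123}$ and in the derivation $\bar\mu$ applied to the resulting four-forms. Conceptually nothing can go wrong once one checks that $\bar\mu$ does not annihilate $\ast\Phi^{13}$ and $\ast\Phi^{123}$; because the $\delbar$-harmonic spaces are already one-dimensional, a single nonvanishing coefficient suffices to kill the intersection with $\Ker\mu^{*}$. The one point demanding care is verifying that this coefficient is genuinely nonzero rather than the result of a cancellation, but given the explicit half-integer structure constants of this $J$ no such cancellation occurs, and the argument closes.
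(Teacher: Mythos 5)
Your proposal follows essentially the same route as the paper's proof: reduce to $\mathcal{H}^{p,0}_{\bar\delta}=\mathcal{H}^{p,0}_{\delbar}\cap\Ker\mu^{*}$, dispose of $p=1$ by bidegree reasons, and for $p=2,3$ test the one-dimensional generators $\Phi^{13}$ and $\Phi^{123}$ against the condition $\bar\mu\ast\psi=0$ using the $(0,2)$-parts of the structure equations. The explicit computation bears out your claim that no cancellation occurs: $\bar\mu\ast(A\Phi^{13})=-\tfrac{i}{4}A\,\Phi^{12\bar1\bar2\bar3}$ and $\bar\mu\ast(A\Phi^{123})=\tfrac{A}{2}\bigl(\Phi^{13\bar1\bar2}-\Phi^{12\bar1\bar3}\bigr)$, each a combination of linearly independent basis forms that vanishes only when $A=0$.
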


\medskip
\noindent

We compute now the dimensions of the almost-complex Dolbeault cohomology groups $H^{p,0}_{\text{Dol}}$, for $p=1,2,3$.\\

As done above, notice that by \cite[Proposition 4.10]{cirici-wilson-1}, 
$$
H^{p,0}_{\text{Dol}}\simeq \mathcal{H}^{p,0}_{\delbar}\cap\text{Ker}\,\bar\mu.
$$

\subsection{Computations for $\mathcal{H}^{1,0}_{\text{Dol}}$, $\mathcal{H}^{2,0}_{\text{Dol}}$ and $\mathcal{H}^{3,0}_{\text{Dol}}$ }

Clearly, by the structure equations and by the previous computations

$$
H^{1,0}_{\text{Dol}}\simeq \mathcal{H}^{1,0}_{\delbar}\cap\text{Ker}\,\bar\mu=\left\langle\Phi^1\right\rangle\,.
$$
Now, since $\mathcal{H}^{2,0}_{\delbar}=\left\langle\Phi^{13}\right\rangle$
and by a direct computation $\bar\mu\Phi^{13}=\frac{1}{2}\Phi^{1\bar1\bar3}\neq 0$, one has that
$$
H^{2,0}_{\text{Dol}}=\left\lbrace 0\right\rbrace.
$$
Similarly, since $\mathcal{H}^{3,0}_{\delbar}=\left\langle\Phi^{123}\right\rangle$
and by a direct computation $\bar\mu\Phi^{123}\neq 0$, one has that
$$
H^{3,0}_{\text{Dol}}=\left\lbrace 0\right\rbrace.
$$

\medskip
\noindent
Therefore, we just proved the following
\begin{theorem}
Let $(X,J,\omega)$ be the almost-Hermitian manifold previously constructed.  Then,
\begin{itemize}
\item $h^{1,0}_{\text{Dol}}=1$,
\item $h^{2,0}_{\text{Dol}}=0$,
\item $h^{3,0}_{\text{Dol}}=0$.
\end{itemize}
\end{theorem}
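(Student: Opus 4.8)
The plan is to reduce each $H^{p,0}_{\text{Dol}}$ to a single algebraic check by invoking the isomorphism $H^{p,0}_{\text{Dol}}\simeq \mathcal{H}^{p,0}_{\delbar}\cap\Ker\,\bar\mu$ from \cite[Proposition 4.10]{cirici-wilson-1}. Since the spaces of $\delbar$-harmonic $(p,0)$-forms for this $J$ have already been computed---namely $\mathcal{H}^{1,0}_{\delbar}=\langle\Phi^1\rangle$, $\mathcal{H}^{2,0}_{\delbar}=\langle\Phi^{13}\rangle$ and $\mathcal{H}^{3,0}_{\delbar}=\langle\Phi^{123}\rangle$, each one-dimensional---it suffices, for every $p$, to decide whether the single generator lies in $\Ker\,\bar\mu$, where $\bar\mu$ denotes the $(p-1,2)$-component of $d$.

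First I would dispose of $p=1$: since $d\Phi^1=0$, its $(0,2)$-component vanishes, so $\bar\mu\Phi^1=0$ and thus $\Phi^1\in\Ker\,\bar\mu$; hence $H^{1,0}_{\text{Dol}}\simeq\langle\Phi^1\rangle$ and $h^{1,0}_{\text{Dol}}=1$.

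Next, for $p=2,3$ I would compute $\bar\mu$ on the relevant generator by applying the graded Leibniz rule and extracting the $(p-1,2)$-part directly from the complex structure equations. Using $d\Phi^1=0$ together with $d\Phi^3=-\tfrac12\Phi^{1\bar3}-\tfrac12\Phi^{\bar1\bar3}$ one obtains $\bar\mu\Phi^{13}=\tfrac12\Phi^{1\bar1\bar3}\neq0$, so $\Phi^{13}\notin\Ker\,\bar\mu$ and therefore $H^{2,0}_{\text{Dol}}=\{0\}$, giving $h^{2,0}_{\text{Dol}}=0$. An entirely analogous computation on $\Phi^{123}$ yields a nonzero $(2,2)$-form $\bar\mu\Phi^{123}$, so $\Phi^{123}\notin\Ker\,\bar\mu$ and $H^{3,0}_{\text{Dol}}=\{0\}$, i.e.\ $h^{3,0}_{\text{Dol}}=0$.

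There is no genuine analytic obstacle here: all the elliptic-regularity and maximum-principle arguments were already carried out in identifying the $\mathcal{H}^{p,0}_{\delbar}$, and each of those spaces is spanned by a single global left-invariant form, so what remains is a purely finite-dimensional linear check. The only point demanding mild care is the bookkeeping of the graded Leibniz rule when isolating the $(p-1,2)$-component of $d(\Phi^I)$---in particular tracking which factor $\bar\mu$ hits while the other operators act on the remaining factors---but the vanishing $d\Phi^1=0$ kills most cross-terms and leaves the surviving contribution transparent.
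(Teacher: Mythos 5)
Your proposal is correct and follows exactly the paper's own route: invoking \cite[Proposition 4.10]{cirici-wilson-1} to identify $H^{p,0}_{\text{Dol}}$ with $\mathcal{H}^{p,0}_{\delbar}\cap\Ker\bar\mu$, then testing the single generators $\Phi^1$, $\Phi^{13}$, $\Phi^{123}$ against $\bar\mu$, with the same key computations $\bar\mu\Phi^{13}=\tfrac12\Phi^{1\bar1\bar3}\neq0$ and $\bar\mu\Phi^{123}\neq0$ (indeed $\bar\mu\Phi^{123}=\tfrac12\Phi^{13\bar1\bar2}-\tfrac12\Phi^{12\bar1\bar3}$). No gaps; this is the paper's argument.
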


\section{The Iwasawa manifold}\label{section:iwasawa}

We study now another $6$-dimensional example. Let $\mathbb{I}$ be the Iwasawa manifold defined as the quotient $\mathbb{I}:=\Gamma\backslash\mathbb{H}_3$ where
$$
\mathbb{H}_3:=\left\lbrace
\left[\begin{matrix}
1 & z_1 &   z_3 \\
0 & 1   & z_2\\
0 & 0 & 1\\
\end{matrix}\right]
\mid z_1,z_2,z_3\in\mathbb{C}
\right\rbrace
$$
and
$$
\Gamma:=\left\lbrace
\left[\begin{matrix}
1 & \gamma_1 &   \gamma_3 \\
0 & 1   & \gamma_2\\
0 & 0 & 1\\
\end{matrix}\right]
\mid \gamma_1,\gamma_2,\gamma_3\in\mathbb{Z}[\,i\,]
\right\rbrace\,.
$$
Then, setting $z_j=x_j+iy_j$, there exists a basis of
left-invariant $1$-forms $\left\lbrace e_i\right\rbrace$ on $\mathbb{I}$ given by
$$
\left\lbrace
\begin{array}{lcl}
e^1 & =& dx_1\\
e^2 & = & dy_1\\
e^3 & = & dx_2\\
e^4 & = & dy_2\\
e^5 & = & dx_3-x_1dx_2+y_1dy_2\\
e^6 & = & dy_3-x_1dy_2-y_1dx_2
\end{array}
\right.\,,
$$
and the dual basis is given by
$$
\left\lbrace
\begin{array}{lcl}
e_1 & =& \frac{\partial}{\partial x_1}\\
e_2 & = & \frac{\partial}{\partial y_1}\\
e_3 & = & \frac{\partial}{\partial x_2}+x_1\frac{\partial}{\partial x_3}+y_1\frac{\partial}{\partial y_3}\\
e_4 & = & \frac{\partial}{\partial y_2}-y_1\frac{\partial}{\partial x_3}+x_1\frac{\partial}{\partial y_3}\\
e_5 & = & \frac{\partial}{\partial x_3}\\
e_6 & = & \frac{\partial}{\partial y_3}
\end{array}
\right.\,.
$$
The following structure equations hold
$$
\left\lbrace
\begin{array}{lcl}
de^1 & =& 0\\
de^2 & = & 0\\
de^3 & = & 0\\
de^4 & = & 0\\
de^5 & = & -e^{13}+e^{24}\\
de^6 & = & -e^{14}-e^{23}
\end{array}
\right.\,.
$$
We define the almost-complex structure $J$ setting as global co-frame of $(1,0)$-forms 
$$
\varphi^1:=e^1+ie^6\,,\qquad
\varphi^2:=e^2+ie^5\,,\qquad
\varphi^3:=e^3+ie^4\,
$$
and let 
$$
V_1:=\frac{1}{2}\left(e_1-ie_6\right)\,,\qquad
V_2:=\frac{1}{2}\left(e_2-ie_5\right)\,,\qquad
V_3:=\frac{1}{2}\left(e_3-ie_4\right)\,
$$
 be the dual frame of vectors.
In particular, the complex structure equations become
$$
\left\lbrace
\begin{array}{lcl}
d\varphi^1 & =& -\frac{1}{4}\varphi^{13}-\frac{i}{4}\varphi^{23}+
\frac{1}{4}\varphi^{1\bar 3}-\frac{i}{4}\varphi^{2\bar 3}+
\frac{1}{4}\varphi^{3\bar 1}+\frac{i}{4}\varphi^{3\bar2}+
\frac{1}{4}\varphi^{\bar1\bar3}-\frac{i}{4}\varphi^{\bar2\bar3}\\[3pt]
d\varphi^2 & = & -\frac{i}{4}\varphi^{13}+\frac{1}{4}\varphi^{23}
-\frac{i}{4}\varphi^{1\bar 3}-\frac{1}{4}\varphi^{2\bar 3}+
\frac{i}{4}\varphi^{3\bar 1}-\frac{1}{4}\varphi^{3\bar2}-
\frac{i}{4}\varphi^{\bar1\bar3}-\frac{1}{4}\varphi^{\bar2\bar3}\\[3pt]
d\varphi^3 & = & 0\\
\end{array}
\right.\,.
$$
Notice that
$$
\omega:=\frac{i}{2}\sum_{j=1}^3\varphi^{j\bar j}
$$
is an almost-K\"ahler metric on $\mathbb{I}$, in particular $(J,\omega)$ is an almost-K\"ahler structure on $\mathbb{I}$.\\
\medskip

We compute now the Hodge numbers $h^{p,0}_{\delbar}$, for $p=1,2,3$.

\subsection{Computations for $\mathcal{H}^{1,0}_{\delbar}$}

Let
$$
\psi=A\varphi^{1}+B\varphi^2+C\varphi^3
$$
with $A,B,C$ smooth functions on $\mathbb{I}$, be an arbitrary $(1,0)$-form on $\mathbb{I}$.
By degree reasons, $\psi$ is $\delbar$-harmonic if and only if $\delbar\psi=0$.
Using the structure equations we have
that $\delbar\psi=0$ if and only if
$$
\left\lbrace
\begin{array}{lcl}
\bar V_1(A) & =& 0\\
\bar V_2(A) & = & 0\\
-\bar V_3(A)+\frac{1}{4}A-\frac{i}{4}B & = & 0\\
\bar V_1(B) & = & 0\\
\bar V_2(B) & =& 0\\
\bar V_3(B)+\frac{i}{4}A+\frac{1}{4}B & =& 0\\
-\bar V_1(C)+\frac{1}{4}A+\frac{i}{4}B & = & 0\\
-\bar V_2(C)+\frac{i}{4}A-\frac{1}{4}B & =& 0\\
\bar V_3(C) & =& 0\\
\end{array}
\right.\,.
$$
From $\bar V_1(A)=\bar V_2(A)=\bar V_1(B)=\bar V_2(B)=0$ we get that
$$
(V_1\bar V_1+V_2\bar V_2)(A)=0\qquad
\text{and}\qquad
(V_1\bar V_1+V_2\bar V_2)(B)=0
$$
and so $A=A(x_2,y_2)$ and $B=B(x_2,y_2)$ depend only on $x_2$ and $y_2$.\\
Hence, from the last three equations we obtain $(V_1\bar V_1+V_2\bar V_2+V_3\bar V_3)(C)=0$ implying that $C$ is constant.
Therefore, $A+iB=0$ giving
$$
-\bar V_3(A)+\frac{1}{2}A=0\qquad
\text{and}\qquad
-\bar V_3(B)-\frac{1}{2}B=0.
$$
We can expand in Fourier series and get
$$
A=\sum_{\lambda,\mu\in\mathbb{Z}}A_{\lambda\mu}e^{2\pi i(\lambda x_2+\mu y_2)},\quad
B=\sum_{\lambda,\mu\in\mathbb{Z}}B_{\lambda\mu}e^{2\pi i(\lambda x_2+\mu y_2)}
$$
with $A_{\lambda\mu},B_{\lambda\mu}$ constants for every $\lambda,\mu\in\mathbb{Z}$. Therefore, $\bar V_3(A)-\frac{1}{2}A=0$ gives
$$
\left(-\pi i\lambda+\pi\mu+\frac{1}{2} \right)A_{\lambda\mu}=0
$$
and since $\mu\in\mathbb{Z}$ we have that $A_{\lambda\mu}=0$ for every $\lambda,\mu\in\mathbb{Z}$. Hence,
$$
A=0\qquad
\text{and}\qquad
B=0.
$$
Therefore,
$$
A=0,\quad B=0, \quad C=\text{const}
$$
hence
$$
\mathcal{H}^{1,0}_{\delbar}=\left\langle \varphi^3\right\rangle
$$
and $h^{1,0}_{\delbar}=1$.\\

\subsection{Computations for $\mathcal{H}^{2,0}_{\delbar}$}

Let
$$
\psi=A\varphi^{12}+B\varphi^{13}+C\varphi^{23}
$$
with $A,B,C$ smooth functions on $\mathbb{I}$, be an arbitrary $(2,0)$-form on $\mathbb{I}$.
By degree reasons, $\psi$ is $\delbar$-harmonic if and only if $\delbar\psi=0$.
Using the structure equations we have
that $\delbar\psi=0$ if and only if
$$
\left\lbrace
\begin{array}{lcl}
\bar V_1(A) & =& 0\\
\bar V_2(A) & = & 0\\
\bar V_3(A) & = & 0\\
\bar V_1(B)-\frac{i}{4}A & = & 0\\
\bar V_2(B)+\frac{1}{4}A & =& 0\\
\bar V_3(B) -\frac{1}{4}B+\frac{i}{4}C& =& 0\\
\bar V_1(C)+\frac{1}{4}A & = & 0\\
\bar V_2(C) +\frac{i}{4}A& =& 0\\
\bar V_3(C)+\frac{i}{4}B+\frac{1}{4}C & =& 0\\
\end{array}
\right.\,.
$$
With similar arguments used above we have that $A=\text{const}$, $B=B(x_2,y_2)$ and $C=C(x_2,y_2)$. In particular, since $\bar V_1(B)=0$ we get that $A=0$.
Therefore, from
$$
\bar V_3(B) -\frac{1}{4}B+\frac{i}{4}C = 0\qquad\text{and}\qquad
\bar V_3(C)+\frac{i}{4}B+\frac{1}{4}C  = 0
$$
we obtain $\bar V_3(B-iC)=0$ hence, $B-iC=\text{const}=:k$.
In particular,
$$
\bar V_3(B) -\frac{1}{4}k=0
$$
and so $B$ is constant implying that also $C$ is constant.
Therefore, $k=0$ giving $B=iC$.\\
Therefore,
$$
A=0,\quad B=iC=\text{const}, 
$$
hence
$$
\mathcal{H}^{2,0}_{\delbar}=\left\langle i\varphi^{13}+\varphi^{23}\right\rangle
$$
and $h^{2,0}_{\delbar}=1$.\\

\subsection{Computations for $\mathcal{H}^{3,0}_{\delbar}$}

Let
$$
\psi=A\varphi^{123}
$$
with $A$ smooth function on $\mathbb{I}$, be an arbitrary $(3,0)$-form on $\mathbb{I}$.
By degree reasons, $\psi$ is $\delbar$-harmonic if and only if $\delbar\psi=0$.
Hence $\delbar\psi=0$ if and only if
$$
\bar V_1(A)=\bar V_2(A)=\bar V_3(A)=0
$$
hence $(V_1\bar V_1+V_2\bar V_2+V_3\bar V_3)(A)=0$ and, since $V_1\bar V_1+V_2\bar V_2+V_3\bar V_3$ is an elliptic operator we have that $A$ is constant. Therefore,
$$
\mathcal{H}^{3,0}_{\delbar}(X)=\left\langle\varphi^{123}\right\rangle
$$
and $h^{3,0}_{\delbar}=1$.

Therefore, we just proved the following
\begin{theorem}
Let $(\mathbb{I},J,\omega)$ be the almost-K\"ahler Iwasawa manifold constructed above. Then,
\begin{itemize}
\item $h^{1,0}_{\delbar}=1$,
\item $h^{2,0}_{\delbar}=1$
\item $h^{3,0}_{\delbar}=1$.
\end{itemize}
\end{theorem}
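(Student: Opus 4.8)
The plan is to exploit the special feature of bidegree $(p,0)$: since there are no $(p,-1)$-forms, $\delbar^*$ vanishes identically on $A^{p,0}(\mathbb{I})$, so a $(p,0)$-form is $\delbar$-harmonic if and only if it is $\delbar$-closed. Thus for each $p=1,2,3$ I would write the most general $(p,0)$-form with smooth complex coefficients --- namely $\psi=A\varphi^1+B\varphi^2+C\varphi^3$ for $p=1$, $\psi=A\varphi^{12}+B\varphi^{13}+C\varphi^{23}$ for $p=2$, and $\psi=A\varphi^{123}$ for $p=3$ --- apply $\delbar$ using the complex structure equations for $d\varphi^1,d\varphi^2,d\varphi^3$ recorded above, and read off the linear PDE system on the coefficients obtained by setting each component of the resulting $(p,1)$-form to zero.

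The case $p=3$ is immediate: one checks that $\delbar\varphi^{123}=0$, the $(3,1)$-contributions coming from $d\varphi^1$ and $d\varphi^2$ cancelling. Hence $\delbar(A\varphi^{123})=0$ reduces to $\bar V_1(A)=\bar V_2(A)=\bar V_3(A)=0$, whence $(V_1\bar V_1+V_2\bar V_2+V_3\bar V_3)(A)=0$; as this operator is elliptic on the compact manifold $\mathbb{I}$ and has no zeroth-order term, $A$ must be constant and $h^{3,0}_{\delbar}=1$.

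For $p=1,2$ the heart of the argument is solving the coupled first-order system, for which I would use two analytic tools repeatedly. First, whenever a coefficient $f$ satisfies $\bar V_1(f)=\bar V_2(f)=0$, I observe that $V_1\bar V_1+V_2\bar V_2=\tfrac14(\partial_{x_1}^2+\partial_{y_1}^2+\partial_{x_3}^2+\partial_{y_3}^2)$ is a strictly elliptic operator without zeroth-order terms in the $(x_1,y_1,x_3,y_3)$-directions; since the fibres are compact, the maximum principle forces $f$ to depend only on the base coordinates $(x_2,y_2)$. Second, once a coefficient descends to a function on the torus $\mathbb{T}^2$ with coordinates $(x_2,y_2)$, the remaining equations read $\bar V_3(f)=\tfrac12(\partial_{x_2}+i\partial_{y_2})f=\mathrm{const}\cdot f$, and I expand $f=\sum_{\lambda,\mu}f_{\lambda\mu}e^{2\pi i(\lambda x_2+\mu y_2)}$ to convert these into algebraic conditions on the Fourier modes. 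For $p=1$ this chain forces $C$ to be constant, then the relation $A+iB=0$ together with the $\bar V_3$-equations kills every Fourier mode of $A$ and $B$, giving $\mathcal{H}^{1,0}_{\delbar}=\langle\varphi^3\rangle$. For $p=2$ the same scheme yields $A$ constant; then $B$ and $C$ descend to the base, so $\bar V_1(B)=0$, and since $\bar V_1(B)=\tfrac i4A$ this forces $A=0$; finally the combination $B-iC$ satisfies $\bar V_3(B-iC)=0$, hence is constant on $\mathbb{T}^2$, and integrating $\bar V_3(B)$ over the torus forces that constant to vanish, leaving $B=iC$ constant, i.e. $\mathcal{H}^{2,0}_{\delbar}=\langle i\varphi^{13}+\varphi^{23}\rangle$.

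The main obstacle I anticipate is the bookkeeping in the coupled system for $p=2$: the inhomogeneous terms (the $\tfrac14A$, $\tfrac i4A$, and so on) must be tracked carefully to distinguish a coefficient that is merely constant from one that is actually forced to be zero, and the correct linear combinations --- here $A+iB$ for $p=1$ and $B-iC$ for $p=2$ --- must be spotted in order to decouple the $\bar V_3$-equations into a single Cauchy--Riemann-type equation on the base torus. Once those combinations are identified, the ellipticity-plus-Fourier argument closes each case mechanically and yields $h^{1,0}_{\delbar}=h^{2,0}_{\delbar}=h^{3,0}_{\delbar}=1$.
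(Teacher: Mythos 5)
Your proposal is correct and follows essentially the same route as the paper's proof: the reduction of $\delbar$-harmonicity to $\delbar$-closedness in bidegree $(p,0)$, the ellipticity/maximum-principle argument on the compact fibers forcing coefficients to descend to the $(x_2,y_2)$-torus, Fourier expansion there, and the same key linear combinations $A+iB$ (for $p=1$) and $B-iC$ (for $p=2$). The only cosmetic difference is that you deduce $k=0$ by integrating $\bar V_3(B)=\tfrac14 k$ over the torus, while the paper reads the same conclusion off the Fourier modes; the two arguments are equivalent.
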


We compute now the numbers $h^{p,0}_{\bar\delta}$, for $p=1,2,3$.


\noindent
\medskip

First of all, as noticed before, for bidegree reasons 
$$
\mathcal H^{1,0}_{\bar\delta}=\mathcal H^{1,0}_{\delbar}\,,
$$
hence we are left to compute $\mathcal H^{2,0}_{\bar\delta}$ and $\mathcal H^{3,0}_{\bar\delta}$.

\subsection{Computations for $\mathcal{H}^{2,0}_{\bar\delta}$}

It is immediate to see that
$$
\mathcal{H}^{2,0}_{\bar\delta}=\mathcal{H}^{2,0}_{\delbar}\cap
\text{Ker}\,(\mu^*)\,.
$$
Since 
$$
\mathcal{H}^{2,0}_{\delbar}=\langle i\varphi^{13}+\varphi^{23}\rangle
$$
 we set
$$
\psi=A(i\varphi^{13}+\varphi^{23})
$$
 with $A\in\mathbb{C}$. 
Then, $\psi\in \text{Ker}\,(\mu^*)$ if and only if $\bar\mu*\psi=0$.
Since $*\psi=A\cdot\text{const}\cdot(-i\varphi^{123\bar2}+\varphi^{123\bar1})$ and by the structure equations we have that
$$
\bar\mu\varphi^{123\bar 2}=-\frac{1}{4}\varphi^{23\bar1\bar2\bar3}-
\frac{i}{4}\varphi^{13\bar1\bar2\bar3}
$$
and
$$
\bar\mu\varphi^{123\bar 1}=-\frac{i}{4}\varphi^{23\bar1\bar2\bar3}+
\frac{1}{4}\varphi^{13\bar1\bar2\bar3}
$$
we get that
$$
\bar\mu*\psi=0
$$
Therefore,
$$
\mathcal H^{2,0}_{\bar\delta}=\mathcal{H}^{2,0}_{\delbar}=\langle i\varphi^{13}+\varphi^{23}\rangle
$$
and $h^{2,0}_{\bar\delta}=1$.

\subsection{Computations for $\mathcal{H}^{3,0}_{\bar\delta}$}

Clearly, as before
$$
\mathcal{H}^{3,0}_{\bar\delta}=\mathcal{H}^{3,0}_{\delbar}\cap
\text{Ker}\,(\mu^*)\,.
$$
Since $\mathcal{H}^{3,0}_{\delbar}=\langle\varphi^{123}\rangle$ we set
$\psi=A\varphi^{123}$ with $A\in\mathbb{C}$. Then, $\psi\in \text{Ker}\,(\mu^*)$ if and only if $\bar\mu*\psi=0$.
Since $*\psi=A\cdot\text{const}\cdot\varphi^{123}$ and, by the structure equations
$$
\bar\mu*\psi=A\cdot\text{const}\cdot\left(\frac{1}{4}\varphi^{23\bar1\bar3}-
\frac{i}{4}\varphi^{23\bar2\bar3}+
\frac{i}{4}\varphi^{13\bar1\bar3}+
\frac{1}{4}\varphi^{13\bar2\bar3}\right).
$$
Then, $\bar\mu*\psi=0$ if and only if $A=0$. Therefore,
$$
\mathcal H^{3,0}_{\bar\delta}=\left\lbrace 0\right\rbrace
$$
and $h^{3,0}_{\bar\delta}=0$.

\medskip
\noindent
Therefore, we just proved the following
\begin{theorem}
Let $(\mathbb I,J,\omega)$ be the almost-K\"ahler Iwasawa manifold previously constructed. Then,
\begin{itemize}
\item $h^{1,0}_{\bar\delta}=1$,
\item $h^{2,0}_{\bar\delta}=1$,
\item $h^{3,0}_{\bar\delta}=0$.
\end{itemize}
\end{theorem}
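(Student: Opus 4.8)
The plan is to invoke the bigraded identity $\mathcal{H}^{p,0}_{\bar\delta}=\mathcal{H}^{p,0}_{\delbar}\cap\mathcal{H}^{p,0}_{\mu}$ recorded in Section~\ref{preliminaries}, which reduces the problem to intersecting the spaces $\mathcal{H}^{p,0}_{\delbar}$—already determined in the preceding theorem—with $\mathcal{H}^{p,0}_{\mu}$. On a $(p,0)$-form the operator $\mu$ takes values in $A^{p+2,-1}(\mathbb{I})=0$, so $\mu\psi=0$ automatically and the condition $\psi\in\mathcal{H}^{p,0}_{\mu}$ collapses to $\mu^*\psi=0$; by Hodge duality the latter is equivalent to $\bar\mu(*\psi)=0$. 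Hence for each $p$ it suffices to test the explicit $\delbar$-harmonic representatives against the single scalar condition $\bar\mu(*\psi)=0$.

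First I would treat $p=1$: since $\mu^*$ sends $A^{1,0}(\mathbb{I})$ into $A^{-1,1}(\mathbb{I})=0$, the condition $\mu^*\psi=0$ is vacuous, so $\mathcal{H}^{1,0}_{\bar\delta}=\mathcal{H}^{1,0}_{\delbar}$ and the previous theorem gives $h^{1,0}_{\bar\delta}=h^{1,0}_{\delbar}=1$.

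For $p=2$ I would take the generator $i\varphi^{13}+\varphi^{23}$ of $\mathcal{H}^{2,0}_{\delbar}$, compute its Hodge dual relative to $\omega=\frac{i}{2}\sum_{j=1}^3\varphi^{j\bar j}$ (a linear combination of $\varphi^{123\bar1}$ and $\varphi^{123\bar2}$), and apply $\bar\mu$ by extending the structure equations for $d\varphi^1,d\varphi^2$ through the graded Leibniz rule and retaining only the $(2,3)$-component. The two resulting contributions should cancel, so the entire line lies in $\Ker\mu^*$, yielding $\mathcal{H}^{2,0}_{\bar\delta}=\mathcal{H}^{2,0}_{\delbar}$ and $h^{2,0}_{\bar\delta}=1$. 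For $p=3$ I would put $\psi=A\varphi^{123}$, observe that $*\psi$ is a constant multiple of $\varphi^{123}$, and compute $\bar\mu\varphi^{123}$ as a nonzero element of $A^{2,2}(\mathbb{I})$; the condition $\bar\mu(*\psi)=0$ then forces $A=0$, so $\mathcal{H}^{3,0}_{\bar\delta}=\{0\}$ and $h^{3,0}_{\bar\delta}=0$.

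The main obstacle is entirely computational: evaluating $\bar\mu$ on the higher-degree forms $*\psi$, which requires propagating the first-order structure equations (given only on the coframe $\varphi^1,\varphi^2,\varphi^3$) to wedge products and then projecting onto bidegree $(p-1,q+2)$. The delicate point is the $p=2$ cancellation, where the coefficients of $\varphi^{13\bar1\bar2\bar3}$ and $\varphi^{23\bar1\bar2\bar3}$ arising from $\bar\mu\varphi^{123\bar1}$ and $\bar\mu\varphi^{123\bar2}$ must combine to zero; everything else is routine sign and coefficient bookkeeping.
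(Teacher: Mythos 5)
Your proposal follows essentially the same route as the paper: reduce via $\mathcal H^{p,0}_{\bar\delta}=\mathcal H^{p,0}_{\delbar}\cap\Ker(\mu^*)$, note the $p=1$ case is vacuous for bidegree reasons, and test the explicit $\delbar$-harmonic generators against $\bar\mu(*\psi)=0$ using the structure equations. Your anticipated outcomes (cancellation of the two contributions for $p=2$, giving $h^{2,0}_{\bar\delta}=1$, and non-vanishing of $\bar\mu(*\varphi^{123})$ forcing $h^{3,0}_{\bar\delta}=0$) are exactly what the paper's computation confirms.
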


We compute now the dimensions of the almost-complex Dolbeault cohomology groups $H^{p,0}_{\text{Dol}}$, for $p=1,2,3$.\\

\medskip


As done above, notice that by \cite[Proposition 4.10]{cirici-wilson-1}, 
$$
H^{p,0}_{\text{Dol}}\simeq \mathcal{H}^{p,0}_{\delbar}\cap\text{Ker}\,\bar\mu.
$$

\subsection{Computations for $\mathcal{H}^{1,0}_{\text{Dol}}$, $\mathcal{H}^{2,0}_{\text{Dol}}$ and $\mathcal{H}^{3,0}_{\text{Dol}}$ }

Clearly, by the structure equations and by the previous computations

$$
H^{1,0}_{\text{Dol}}\simeq \mathcal{H}^{1,0}_{\delbar}\cap\text{Ker}\,\bar\mu=\left\langle\varphi^3\right\rangle\,.
$$
Now, since $\mathcal{H}^{2,0}_{\delbar}=\left\langle i\varphi^{13}+\varphi^{23}\right\rangle$
and by a direct computation $\bar\mu(i\varphi^{13}+\varphi^{23})= 0$, one has that
$$
H^{2,0}_{\text{Dol}}=\left\langle i\varphi^{13}+\varphi^{23}\right\rangle.
$$
Since $\mathcal{H}^{3,0}_{\delbar}=\left\langle\varphi^{123}\right\rangle$
and by a direct computation $\bar\mu\varphi^{123}\neq 0$, one has that
$$
H^{3,0}_{\text{Dol}}=\left\lbrace 0\right\rbrace.
$$

\medskip
\noindent
In particular, we have the following
\begin{theorem}
Let $(\mathbb I,J,\omega)$ be the almost-K\"ahler Iwasawa manifold previously constructed. Then,
\begin{itemize}
\item $h^{1,0}_{\text{Dol}}=1$,
\item $h^{2,0}_{\text{Dol}}=1$,
\item $h^{3,0}_{\text{Dol}}=0$.
\end{itemize}
\end{theorem}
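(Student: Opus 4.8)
The plan is to exploit the isomorphism $H^{p,0}_{\text{Dol}}\simeq\mathcal H^{p,0}_{\delbar}\cap\Ker\bar\mu$ recalled above from \cite[Proposition 4.10]{cirici-wilson-1}, together with the explicit description of the spaces $\mathcal H^{p,0}_{\delbar}$ obtained in the previous theorem. Since each of these $\delbar$-harmonic spaces is one-dimensional, generated respectively by $\varphi^3$, by $i\varphi^{13}+\varphi^{23}$ and by $\varphi^{123}$, the computation of $H^{p,0}_{\text{Dol}}$ reduces to deciding, in each degree, whether the single generator is annihilated by $\bar\mu$: if it is, the Dolbeault group has dimension $1$, otherwise it is trivial. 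This turns an a priori infinite-dimensional question into a finite, purely algebraic check on three explicit forms.

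To carry this out I would first record the action of $\bar\mu$ on the generating $(1,0)$-forms by reading off the $(0,2)$-components of the complex structure equations, namely $\bar\mu\varphi^1=\frac14\varphi^{\bar1\bar3}-\frac{i}{4}\varphi^{\bar2\bar3}$, $\bar\mu\varphi^2=-\frac{i}{4}\varphi^{\bar1\bar3}-\frac14\varphi^{\bar2\bar3}$ and $\bar\mu\varphi^3=0$. Since $\bar\mu$ is a graded derivation, extending these by the Leibniz rule determines $\bar\mu$ on all the generators. In degree one $\bar\mu\varphi^3=0$ is immediate, so $H^{1,0}_{\text{Dol}}\simeq\langle\varphi^3\rangle$ and $h^{1,0}_{\text{Dol}}=1$. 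In degree three a direct expansion yields $\bar\mu\varphi^{123}=\frac14\varphi^{23\bar1\bar3}-\frac{i}{4}\varphi^{23\bar2\bar3}+\frac{i}{4}\varphi^{13\bar1\bar3}+\frac14\varphi^{13\bar2\bar3}\neq0$, so the one-dimensional space $\mathcal H^{3,0}_{\delbar}$ meets $\Ker\bar\mu$ trivially and $h^{3,0}_{\text{Dol}}=0$.

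The delicate point, and the place I expect the real bookkeeping to live, is degree two: the precise linear combination $i\varphi^{13}+\varphi^{23}$ singled out by the harmonicity computation is exactly the one for which the two $\bar\mu$-contributions cancel. Applying the Leibniz rule gives $\bar\mu\varphi^{13}=\frac14\varphi^{3\bar1\bar3}-\frac{i}{4}\varphi^{3\bar2\bar3}$ and $\bar\mu\varphi^{23}=-\frac{i}{4}\varphi^{3\bar1\bar3}-\frac14\varphi^{3\bar2\bar3}$, whence $\bar\mu(i\varphi^{13}+\varphi^{23})=0$; hence $H^{2,0}_{\text{Dol}}\simeq\langle i\varphi^{13}+\varphi^{23}\rangle$ and $h^{2,0}_{\text{Dol}}=1$. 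The only genuine source of error is the sign and reordering bookkeeping in the wedge products $\varphi^{\bar1\bar3}\wedge\varphi^3$, $\varphi^{\bar1\bar3}\wedge\varphi^{23}$ and their analogues; once these are normalized to the standard ordering the cancellation in degree two becomes transparent. I emphasize that no analytic input (Fourier expansion or ellipticity) is required at this stage, since the $\delbar$-harmonic representatives are already in hand and the remaining argument is algebraic.
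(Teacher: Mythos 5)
Your proposal is correct and takes essentially the same route as the paper: both reduce the problem via the isomorphism $H^{p,0}_{\text{Dol}}\simeq\mathcal H^{p,0}_{\delbar}\cap\Ker\bar\mu$ and then check $\bar\mu$ on the explicit generators $\varphi^3$, $i\varphi^{13}+\varphi^{23}$, $\varphi^{123}$ of the previously computed $\delbar$-harmonic spaces. The only difference is that you write out the Leibniz-rule expansions that the paper compresses into ``by a direct computation,'' and your sign bookkeeping (in particular the cancellation $\bar\mu(i\varphi^{13}+\varphi^{23})=0$ and the nonvanishing of $\bar\mu\varphi^{123}$) agrees with the paper's results.
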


\section{Obstructions to the existence of a compatible symplectic structure on an almost-complex manifold}

Let $(X,J)$ be an almost-complex manifold and fix a Hermitian metric $g$ with fundamental form $\omega$. Then, setting $\bar\delta:=\delbar+\mu$ and $\delta:=\del+\bar\mu$ one can consider
the following differential operators 
$$
\Delta_{\bar\delta}:=\bar\delta\bar\delta^*+\bar\delta^*\bar\delta\,,
$$
$$
\Delta_{\delta}:=\delta\delta^*+\delta^*\delta\,.
$$
In \cite{tardini-tomassini} we studied Hodge theory for such operators, and even though they do not coincide in general, as a consequence of the almost-K\"ahler identities, if $(X,J,g,\omega)$ is an almost-K\"ahler manifold, then
$\Delta_{\bar\delta}$ and $\Delta_{\delta}$ are related by 
$$
\Delta_{\bar\delta}=\Delta_{\delta}\,.
$$
In particular, their spaces of harmonic forms coincide, i.e. $\mathcal{H}^{\bullet}_{\delta}(X)=\mathcal{H}^{\bullet}_{\bar\delta}(X)$\,.\\
We can use now this result to prove an obstruction to the existence of a compatible symplectic structure on an almost-complex manifold.

\begin{theorem}
Let $(X,J)$ be a compact almost-complex manifold. Suppose that there exists $\varphi\in A^{1,0}(X)$ such that $\delbar\varphi=0$ and $d\varphi\neq 0$. Then, there exists no compatible symplectic structure on $(X,J)$.
\end{theorem}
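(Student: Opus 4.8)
The plan is to argue by contradiction, using the almost-K\"ahler identity $\Delta_{\bar\delta}=\Delta_{\delta}$ recalled just above the statement as the essential bridge. Suppose $(X,J)$ carried a compatible symplectic structure $\omega$; then $g(\cdot,\cdot):=\omega(\cdot,J\cdot)$ is an almost-Hermitian metric whose fundamental form is the closed form $\omega$, so that $(X,J,g,\omega)$ is almost-K\"ahler. With respect to this $g$ the identity yields $\mathcal{H}^{\bullet}_{\bar\delta}(X)=\mathcal{H}^{\bullet}_{\delta}(X)$, and the idea is to exploit that the hypotheses on $\varphi$ place it for free in the left-hand space, while membership in the right-hand space carries strictly more closedness than $\varphi$ is allowed to have.

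First I would observe that $\varphi\in A^{1,0}(X)$ is automatically $\bar\delta$-harmonic. Indeed $\mu\varphi\in A^{3,-1}=0$, so $\bar\delta\varphi=\delbar\varphi=0$ by hypothesis; and for bidegree reasons $\bar\delta^*\varphi=\delbar^*\varphi+\mu^*\varphi=0$, since $\delbar^*\varphi\in A^{1,-1}=0$ and $\mu^*\varphi\in A^{-1,1}=0$. Hence $\Delta_{\bar\delta}\varphi=0$, i.e. $\varphi\in\mathcal{H}^{1}_{\bar\delta}(X)$; this is exactly the bidegree reason underlying the equality $\mathcal{H}^{1,0}_{\bar\delta}=\mathcal{H}^{1,0}_{\delbar}$ used repeatedly earlier.

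Next, applying $\mathcal{H}^{\bullet}_{\bar\delta}(X)=\mathcal{H}^{\bullet}_{\delta}(X)$ I obtain $\varphi\in\mathcal{H}^{1}_{\delta}(X)$, so $\Delta_{\delta}\varphi=0$. Since $X$ is compact and $\Delta_{\delta}$ is elliptic and self-adjoint, the standard identity $\langle\Delta_{\delta}\varphi,\varphi\rangle=\|\delta\varphi\|^2+\|\delta^*\varphi\|^2$ forces $\delta\varphi=0$. Writing $\delta=\del+\bar\mu$ and noting that $\del\varphi\in A^{2,0}$ while $\bar\mu\varphi\in A^{0,2}$ live in distinct bidegrees, the single relation $\delta\varphi=0$ splits into $\del\varphi=0$ and $\bar\mu\varphi=0$.

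Finally I would assemble $d\varphi=\mu\varphi+\del\varphi+\delbar\varphi+\bar\mu\varphi$ and read off that every summand vanishes: $\mu\varphi=0$ and $\delbar\varphi=0$ from the outset, and $\del\varphi=0$, $\bar\mu\varphi=0$ just obtained. Thus $d\varphi=0$, contradicting $d\varphi\neq0$, and no compatible symplectic structure can exist. The one genuinely delicate point is the second step: the whole argument hinges on the almost-K\"ahler identity upgrading the essentially free $\bar\delta$-harmonicity of $\varphi$ into $\delta$-harmonicity, which is precisely where the closedness of $\omega$ enters and where the additional relations $\del\varphi=0$ and $\bar\mu\varphi=0$ are manufactured; everything else is bookkeeping with bidegrees.
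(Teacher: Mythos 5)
Your proof is correct and follows essentially the same route as the paper: both rest on the almost-K\"ahler identity $\Delta_{\bar\delta}=\Delta_{\delta}$ from \cite{tardini-tomassini}, with $\varphi$ lying in $\Ker\Delta_{\bar\delta}$ for bidegree reasons while $d\varphi\neq 0$ prevents it from lying in $\Ker\Delta_{\delta}$. The only difference is presentational: the paper states the incompatibility of the two kernels directly, whereas you unpack it into an explicit contradiction, spelling out the splitting $\delta\varphi=\del\varphi+\bar\mu\varphi=0$ by bidegree --- a correct and faithful elaboration of the same argument.
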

\begin{proof}
Since, $\delbar\varphi=0$ then, for degree reasons $\varphi\in\text{Ker}\,\Delta_{\bar\delta}$ for any arbitrary Hermitian metric. However, since $d\varphi\neq 0$ then, for any fixed Hermitian metric, $\varphi\notin\text{Ker}\,\Delta_{\delta}$. Namely, $\Delta_{\bar\delta}\neq \Delta_{\delta}$ and the thesis follows, since, by \cite{tardini-tomassini} on almost-K\"ahler manifolds
$\Delta_{\bar\delta}= \Delta_{\delta}$.
\end{proof}

An immediate corollary is the following

\begin{cor}\label{cor:no-ak}
Let $(X,J)$ be a compact almost-complex manifold such that there exists a global co-frame of $(1,0)$-forms $\left\lbrace\varphi^i\right\rbrace$ such that, there exists an index $j$ with
$$
d\varphi^j\in A^{2,0}(X)\oplus A^{0,2}(X)
$$
and $d\varphi^j\neq 0$.
Then, there exists no compatible symplectic structure on $(X,J)$.
\end{cor}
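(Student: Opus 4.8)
The plan is to reduce the corollary directly to the preceding Theorem, which provides the essential obstruction: the existence of a $(1,0)$-form $\varphi$ with $\delbar\varphi=0$ but $d\varphi\neq 0$ rules out any compatible symplectic structure. So my first step is to take the distinguished index $j$ from the hypothesis and set $\varphi:=\varphi^j$, which is by assumption a global $(1,0)$-form with $d\varphi^j\neq 0$. It remains only to verify that this $\varphi^j$ satisfies the hypothesis of the Theorem, namely that $\delbar\varphi^j=0$.

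The key observation is the bidegree bookkeeping. Recall that $d=\mu+\partial+\delbar+\bar\mu$ decomposes into components of bidegrees $(2,-1)$, $(1,0)$, $(0,1)$, $(-1,2)$ relative to the shift applied to a $(p,q)$-form. Applied to the $(1,0)$-form $\varphi^j$, the four pieces $\mu\varphi^j$, $\partial\varphi^j$, $\delbar\varphi^j$, $\bar\mu\varphi^j$ live in the pairwise-distinct spaces $A^{3,-1}=0$, $A^{2,0}$, $A^{1,1}$, $A^{0,2}$ respectively. By hypothesis $d\varphi^j\in A^{2,0}(X)\oplus A^{0,2}(X)$, so the $(1,1)$-component of $d\varphi^j$ must vanish; but that component is exactly $\delbar\varphi^j$. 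Hence $\delbar\varphi^j=0$, as required.

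With $\delbar\varphi^j=0$ and $d\varphi^j\neq 0$ established, the hypotheses of the Theorem are met with $\varphi=\varphi^j$, and the Theorem immediately yields that $(X,J)$ admits no compatible symplectic structure. I do not expect any genuine obstacle here: the entire content is the bidegree decomposition argument that isolates $\delbar\varphi^j$ as the $(1,1)$-part of $d\varphi^j$, after which the result is a one-line invocation of the Theorem. The only point warranting care is confirming that $\mu\varphi^j$ vanishes automatically by landing in $A^{3,-1}(X)=0$ (so that the condition $d\varphi^j\in A^{2,0}\oplus A^{0,2}$ genuinely forces the $\delbar$-component alone to die, rather than leaving an uncontrolled $\mu$-term), which is clear since no $(3,-1)$-forms exist.
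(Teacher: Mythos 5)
Your proposal is correct and is exactly the argument the paper intends: the hypothesis $d\varphi^j\in A^{2,0}(X)\oplus A^{0,2}(X)$ forces the $(1,1)$-component $\delbar\varphi^j$ of $d\varphi^j$ to vanish (the $\mu$-component lying in $A^{3,-1}(X)=\{0\}$ automatically), so the preceding Theorem applies with $\varphi=\varphi^j$. The paper states the corollary as immediate without writing this out, and your bidegree bookkeeping supplies precisely the missing one-line verification.
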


We apply this result to the following example.
\begin{ex}
Let $\mathbb{I}$ be the Iwasawa manifold defined as the quotient $\mathbb{I}:=\Gamma\backslash\mathbb{H}_3$ where
$$
\mathbb{H}_3:=\left\lbrace
\left[\begin{matrix}
1 & z_1 &   z_3 \\
0 & 1   & z_2\\
0 & 0 & 1\\
\end{matrix}\right]
\mid z_1,z_2,z_3\in\mathbb{C}
\right\rbrace
$$
and
$$
\Gamma:=\left\lbrace
\left[\begin{matrix}
1 & \gamma_1 &   \gamma_3 \\
0 & 1   & \gamma_2\\
0 & 0 & 1\\
\end{matrix}\right]
\mid \gamma_1,\gamma_2,\gamma_3\in\mathbb{Z}[\,i\,]
\right\rbrace\,.
$$
Set $\psi^1:=d\bar z_1$, $\psi^2:=d\bar z_2$ $\psi^3:=d\bar z_3-z_1dz_2$.
Hence, the structure equations are
$$
d\psi^1=0,\quad d\psi^2=0,\quad d\psi^3=-\psi^{\bar 1\bar 2},
$$
therefore, by Corollary \ref{cor:no-ak} the Iwasawa manifold with this almost-complex structure does not admit any compatible symplectic structure.
\end{ex}

Clearly, the converse implication does not hold as we have seen in Section \ref{section:no-ak}.


\end{document}